\newtheorem{theorem}{Theorem}[section]
\newtheorem{proposition}[theorem]{Proposition}
\newtheorem{lemma}[theorem]{Lemma}
\newtheorem{corollary}[theorem]{Corollary}
\newtheorem{conjecture}[theorem]{Conjecture}
\newtheorem{definition}[theorem]{Definition}
\newtheorem{remark}[theorem]{Remark}
\newtheorem*{prop*}{Proposition}
\newtheorem*{thm*}{Theorem}
\newtheorem*{cor*}{Corollary}
\theoremstyle{definition}
\newtheorem*{defn*}{Definition}
\newtheorem*{rem*}{Remark}
\newtheorem{ob}[theorem]{Observation}
\providecommand{\tightlist}{%
  \setlength{\itemsep}{0pt}\setlength{\parskip}{0pt}}
\newcommand{\BR}{{\mathbb{R}}}     
\title{Uniformly affine actions on Banach spaces: growth of cocycles }
\author[K. Boucher]{Kevin Boucher}
\address{}
\email{kevin.boucher01@gmail.com}
\author[G. Grützner]{Georg Grützner}
\address{Department of Mathematics, University of Luxembourg, L-4364 Esch-sur-Alzette}
\email{georg.gruetzner@uni.lu}
\subjclass[2020]{Primary 22D12; Secondary 43A07, 20F69, 22D55}
\keywords{Kazhdan’s Property (T), amenability, growth of cocycles, uniformly bounded representations, equivariant compression, geometric group theory}
\begin{document}
\maketitle
\begin{abstract}
We investigate growth properties of cocycles with values in uniformly bounded representations on super-reflexive Banach spaces; this includes $L^p$-spaces for $1<p<\infty$ as well as Hilbert spaces.
We then study the generalized Hilbert compression of cocycles arising in this setting for the Property (T) groups $\mathrm{Sp}(n,1)$, $n\ge 2$, and establish the existence of uniformly Lipschitz affine actions with optimal growth.\end{abstract}
\subsubsection*{Notation}

We write \(\lesssim\) to mean inequality up to a positive multiplicative
constant, and the corresponding equivalence is denoted
\(\asymp\).
Where it is more suitable we fall back to Landau
  notation i.e.~given a function
  \(g: \mathbb{R}_{>0} \to \mathbb{R}_{>0}\), we write \(o(g(s))\) as
  \(s \to 0^+\) or \(+\infty\) for any term depending on \(s > 0\) such
  that \(\lim_{s \to 0^+\text{ or } +\infty} \frac{o(g(s))}{g(s)} = 0\).
For a topological space \(X\), and
\(f, h: X \rightarrow \mathbb{R}_{+}\), we write \(f \preceq h\) if
there exists a constant \(M > 0\) and a compact subset \(K \subset X\)
such that \(f \leq M h\) outside \(K\). We write \(f \sim h\) if
\(f \preceq h \preceq f\). We write \(f \prec h\) if, for every
\(\varepsilon>0\), there exists a compact subset \(K \subset G\) such
that \(f \leq \varepsilon h\) outside \(K\).

\section{Introduction}
\label{sec:introduction}

The study of isometric group actions on Hilbert spaces has deep roots in
geometric group theory and representation theory.

A central notion in this context is \emph{Kazhdan's Property (T)},
introduced by Kazhdan in 1967 as a rigidity property for locally compact
groups. A group with Property (T) is characterized by the fact that
every unitary representation admitting almost invariant vectors must
contain a non-zero invariant vector \cite{Kazhdan1967}.

An equivalent cohomological formulation is provided by the
\emph{Delorme--Guichardet theorem}: a \(\sigma\)-compact locally compact
group has Property (T) if and only if its first cohomology group with
coefficients in any unitary representation is trivial. In geometric
terms, this means that every affine isometric action of the group on a
Hilbert space has a global fixed point \cite{Delorme1977}
\cite{Guichardet1977}.

In sharp contrast with Kazhdan groups, such as higher-rank semisimple
Lie groups and their lattices, Haagerup showed in his landmark 1978  work \cite{Haagerup1978}
that free groups admit proper affine isometric actions on Hilbert
spaces. This result initiated the study of what is now known as the
\emph{Haagerup property} (or \emph{a-T-menability} in Gromov's
terminology) \cite{Haagerup1978}. The Haagerup property provides a
natural obstruction to Property (T). One of its major
consequences is its connection with the Baum--Connes conjecture: Higson
and Kasparov showed in the early 2000s that groups with the Haagerup
property satisfy the Baum--Connes conjecture \cite{Higson2001}.

In a different direction, Cornulier, Tessera, and Valette introduced in
\cite{Cornulier2007} the notion of the \emph{equivariant Hilbert
compression exponent} of a locally compact compactly generated group
\(G\), endowed with a word metric: \[\begin{aligned}
\alpha_{2}^*(G)=\sup \left\{\right. \alpha \geq 0: &\exists \text { unitary representation } \pi,\\  
&\exists  b \in Z^1(G, \pi) \text{ s.t. } \left.\|b(g)\| \succeq|g|_S^\alpha\right\}\end{aligned}\]

This invariant quantifies the asymptotic growth of orbits of affine
isometric actions on Hilbert spaces via the growth of the associated
1-cocycles (see Subsection \ref{sec:statement_of_the_results} for
precise definitions). The equivariant compression exponent refines the
earlier, non-equivariant notion of Hilbert compression introduced by
Guentner and Kaminker \cite{Guentner2004} in the context of uniform
embeddings of metric spaces.

The value of \(\alpha_2^*(G)\) reflects the quality of equivariant
embeddings into Hilbert space. An exponent \(\alpha_2^*(G)=1\) indicates
the existence of embeddings that are arbitrary close to being
quasi-isometric, whereas \(\alpha_2^*(G)<1\) means that some contraction
is unavoidable. A striking dichotomy emerges: for \emph{amenable
groups}, equivariant and non-equivariant Hilbert compressions coincide (a
phenomenon often referred to as \emph{Gromov's trick}), and in many
cases satisfy \(\alpha_2^*(G) > \tfrac12\) \cite{Gournay2016}. By
contrast, for \emph{non-amenable groups}, the equivariant Hilbert
compression exponent is bounded above by \(\tfrac12\).

More precisely, if a group \(G\) admits a Hilbert-valued 1-cocycle with
compression exponent strictly greater than \(\tfrac12\), then \(G\) must
be amenable. Equivalently, non-amenable groups cannot admit cocycles
whose growth exceeds \(\|b(g)\| \sim |g|_S^{1/2}\). Thus, \(\tfrac12\)
appears as a critical threshold for equivariant Hilbert compression. For
instance, the optimal Hilbert cocycle on the free group \(F_2\) exhibits
square-root growth, \[\|b(g)\| \sim |g|_{S}^{1/2},\] achieving the
maximal possible exponent \(\tfrac12\) in the non-amenable setting. For
non-amenable groups, equivariance plays a crucial role. Indeed, Tessera
showed that all word-hyperbolic groups admit non-equivariant Hilbert
embeddings with compression exponent equal to \(1\) \cite{Tessera2011},
demonstrating a sharp contrast between equivariant and non-equivariant
notions of compression.

Moving beyond isometric actions, we investigate cohomology with
coefficients in \emph{uniformly bounded representations}. This broader
framework is motivated by a result of Pisier, who showed that there
exist uniformly bounded representations of non-amenable groups that are
not equivalent to any unitary representation \cite{Pisier2001}.

This perspective is further reinforced by the following conjecture of Shalom:

\begin{conjecture}[Shalom{}]
Once the requirement that the linear part be unitary is dropped, every
Gromov hyperbolic group, including those having Property (T), admits a
proper uniformly Lipschitz affine action on a Hilbert space.
\end{conjecture}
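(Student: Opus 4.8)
The plan is to prove Shalom's conjecture by producing, for an arbitrary non-elementary Gromov hyperbolic group $\Gamma$ with word metric $|\cdot|_S$, a uniformly bounded representation $\pi$ on a Hilbert space $\mathcal H$ and a cocycle $b\in Z^1(\Gamma,\pi)$ with $\|b(g)\|\to\infty$ as $g\to\infty$; since $\sup_g\|\pi(g)\|<\infty$, the affine action $g\cdot v=\pi(g)v+b(g)$ is then automatically uniformly Lipschitz, and since $\|b(g)\|\to\infty$ it is proper, which is the assertion. One reduction is immediate: an a-T-menable hyperbolic group (free groups, surface groups, real and complex hyperbolic lattices, and more generally any hyperbolic group with the Haagerup property) already admits a proper \emph{isometric} affine action on Hilbert space, so the genuine case is that of Property (T) hyperbolic groups --- the one that forces the linear part to be non-unitary. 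The construction below, however, makes no use of this dichotomy: it runs entirely through the Gromov boundary $\partial\Gamma$, so no classification of Property (T) hyperbolic groups is required.

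The engine is the analytic family of boundary (spherical) representations. Fix a visual metric on $\partial\Gamma$, a Patterson--Sullivan measure $\nu$ of dimension $Q$, and, for a parameter $z$, set
\[
(\pi_z(g)\xi)(\zeta)=\Bigl(\tfrac{d(g^{-1})_*\nu}{d\nu}(\zeta)\Bigr)^{z}\xi(g^{-1}\zeta),\qquad \xi\in L^2(\partial\Gamma,\nu).
\]
\emph{Step 1:} establish $\sup_{g\in\Gamma}\|\pi_z(g)\|<\infty$ for $z$ in a region $\Omega$ as large as possible --- for $\Gamma=\mathrm{Sp}(n,1)$ and its lattices this is the analytic continuation of the complementary series past its endpoint, controlled via the Cowling--Haagerup estimates, with the size of $\Omega$ governed by the spectral constant $2n-1$. \emph{Step 2:} the critical value $z_0$ at which $\pi_{z_0}$ is unitary with invariant constant function $\mathbf 1$ lies on the \emph{boundary} of $\Omega$ (it is the endpoint of the complementary series, and $\sup_g\|\pi_z(g)\|\to\infty$ as $z\to z_0$ inside $\Omega$), so, renormalizing the approach to $z_0$ by the rate of this blow-up through a factor $N(z)$, one forms a boundary value
\[
b(g)=\lim_{z\to z_0} N(z)^{-1}\bigl(\pi_z(g)\mathbf 1-\mathbf 1\bigr)
\]
in a suitable completion; this is a genuine, non-coboundary cocycle for the limiting uniformly bounded representation $\pi$ --- the plain derivative cocycle on the unitary line $\mathrm{Re}\,z=z_0$ only reproduces the a-T-menable picture and, by Property (T), is a coboundary, so the gain comes precisely from crossing into $\Omega$. \emph{Step 3:} since the Radon--Nikodym cocycle is the exponentiated Busemann/horofunction cocycle, $\tfrac{d(g^{-1})_*\nu}{d\nu}(\zeta)\asymp e^{Q\beta_{g^{-1}}(\zeta)}$ with $\sup_\zeta\beta_{g^{-1}}(\zeta)\asymp|g|_S$, the cocycle growth estimates of the earlier sections convert the boundary kernel bounds, together with the choice of normalization $N(z)$, into $\|b(g)\|\asymp|g|_S$; thus $b$ is proper of optimal, i.e.\ linear, growth, and the affine action is uniformly Lipschitz and proper.

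The main obstacle is the tension built into Steps 1 and 3: pushing $z$ toward $z_0$ from inside $\Omega$ is precisely what makes $\|b(g)\|$ grow and become proper, but it is also where the uniform bound $\sup_g\|\pi_z(g)\|$ deteriorates, and $\Omega$ itself is only a limited region; the whole argument rests on sharp harmonic-analytic control --- Knapp--Stein intertwining operators, Kunze--Stein-type inequalities, and explicit Riesz-kernel estimates on $\partial\Gamma$ --- of the operator norms $\|\pi_z(g)\|$ throughout $\Omega$ and of the blow-up rate $N(z)$, matched against a lower bound for $\|\pi_z(g)\mathbf 1-\mathbf 1\|$. For $\mathrm{Sp}(n,1)$ all of these estimates are available, which is why the statement can be proved there; for a general Gromov hyperbolic group the uniform boundedness of the boundary representations on a nontrivial $\Omega$ --- the analogue of the Cowling--Haagerup bound --- is not presently known, and it is exactly this missing input (or some entirely different source of uniformly bounded representations admitting proper cocycles) that would be needed to settle Shalom's conjecture in full, in particular for the exotic Property (T) hyperbolic groups.
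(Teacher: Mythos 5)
This statement is Shalom's conjecture: the paper does not prove it and offers no argument for it --- it is recorded as an open problem, with the only known progress being Nishikawa's verification for the rank-one groups $\mathrm{Sp}(n,1)$, which the paper then uses as input for Corollary \ref{99b6bf}. Your proposal is therefore not comparable to a proof in the paper, and, as you yourself concede in the final paragraph, it is not a proof at all: the decisive input of Step 1 --- a Cowling--Haagerup-type uniform bound $\sup_g\|\pi_z(g)\|<\infty$ for boundary representations of an arbitrary hyperbolic group on a nontrivial parameter region $\Omega$ --- is not available beyond the rank-one Lie group setting, and Step 2's renormalized limit $b(g)=\lim_{z\to z_0}N(z)^{-1}(\pi_z(g)\mathbf 1-\mathbf 1)$ is asserted, not established (one must show the limit exists in a completion on which the limiting representation is still uniformly bounded, and that the resulting cocycle is unbounded rather than a coboundary). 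So at best your outline re-describes the known $\mathrm{Sp}(n,1)$ construction; it does not touch the general conjecture, in particular the ``exotic'' Property (T) hyperbolic groups, which is exactly where the conjecture is open.

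Beyond the admitted gap, Step 3 contains a claim that is actually false and is refuted by this very paper. You assert $\|b(g)\|\asymp|g|_S$, i.e.\ linear growth, for a cocycle with coefficients in a uniformly bounded \emph{Hilbert} representation of a non-amenable group. Hilbert space is $2$-uniformly smooth, so Theorem \ref{da6dad} (with $p=2$) says that any cocycle in a uniformly bounded representation with $\|b(g)\|\succ|g|_S^{1/2}$ forces amenability of the group; since non-elementary hyperbolic groups (and in particular $\mathrm{Sp}(n,1)$ lattices) are non-amenable, linear growth is impossible, and the genuinely optimal behavior is $\|b(g)\|\sim|g|_S^{1/2}$, which is precisely what Theorem \ref{6ea617} and Corollary \ref{99b6bf} establish. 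Note that properness of the affine action does not require linear growth --- $\|b(g)\|\to\infty$ suffices, and square-root growth delivers that --- so the conclusion you want survives, but the phrase ``proper of optimal, i.e.\ linear, growth'' must be corrected, and any normalization $N(z)$ engineered to produce linear growth cannot exist.
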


These ideas suggest that the class of uniformly Lipschitz affine actions
on Hilbert space is richer than that of isometric actions,
and that the associated compression may differ.

Recent progress toward this conjecture has been remarkable. Nishikawa
\cite{Nishikawa2020_preprint} verified Shalom's conjecture for the
rank-one simple Lie groups \(\mathrm{Sp}(n,1)\), \(n \geq 2\),
constructing for each such group a metrically proper affine action on a
Hilbert space whose linear part is uniformly bounded but not unitary.
These were the first examples of Property (T) groups admitting proper
affine actions on Hilbert spaces.

In a different direction, moving beyond Hilbert spaces, Druţu, Mackay,
and Vergara showed that many groups known to lack proper isometric
actions on non-reflexive Banach spaces such as \(\ell^1\) admit proper
actions by uniformly Lipschitz affine transformations
\cite{Drutu2023_preprint} \cite{Vergara2023}. These results further
support the idea that allowing limited flexibility, uniformly bounded or
Lipschitz actions instead of strict isometries, can dramatically improve
the large-scale geometry of group orbits.

\subsection{Statement of the results}
\label{sec:statement_of_the_results}

Let \(G\) be a locally compact topological group, \(B\) a Banach space and
\(\operatorname{GL}(B)\) the group of invertible continuous linear
transformations of \(B\). A \emph{(Banach) representation} of \(G\) is a
group homomorphism \[\pi: G \to \operatorname{GL}(B)\] that is
continuous w.r.t. the strong operator topology on
\(\operatorname{GL}(B)\).

A continuous map \(b: G \rightarrow B\) such that
\[b(g h)=b(g)+\pi(g) b(h), \quad \text { for all } g, h \in G\] is
called a \emph{cocycle with coefficients in \((\pi,B)\)}.
The space \(Z^1(\pi,B)\) of all cocycles with coefficients in
\((\pi,B)\), equipped with the compact-open topology, is a real
topological vector space under the pointwise operations.
A cocycle \(b: G \rightarrow B\) for which there exists \(v \in B\) such
that \[b(g)=\pi(g) v- v, \text { for all } g \in G,\] is called a
\emph{coboundary with respect to \(\pi\)}.
The set \(B^1(\pi,B)\) of all coboundaries w.r.t. \(\pi\) is a subspace
of \(Z^1(\pi,B)\).
The quotient vector space \[H^1(\pi,B)=Z^1(\pi,B) / B^1( \pi,B)\] is
called the \emph{first cohomology group with coefficients in
\((\pi,B)\)}.

A Banach representation \((\pi,B)\) of \(G\) is \emph{uniformly
bounded} if \[\sup_{g \in G} \|\pi(g)\|_{op} < \infty.\]

A Banach representation \((\pi ,B)\) is \emph{isometric} if it restricts
to a continuous group homomorphism \[\pi: G \to \operatorname{O}(B),\]
where \(\operatorname{O}(B) < \operatorname{GL}(B)\) is the subgroup of
linear isometric operators of \(B\).

\subsubsection{Results}
\label{sec:results}

Let \(G\) be a locally compact compactly generated group and let
\(S \subset G\) be an \emph{open relatively compact symmetric generating
set}. Then for each \(g \in G\), the \emph{word length} (or \emph{word
norm}) of \(g\) with respect to \(S\) is defined as
\[|g|_S = \min \{ n \in \mathbb{N} : g \in S^{n} \}.\]

\begin{ob}
\label{466efb}
For a given compactly generated, locally compact group and any 
 cocycle, $b$, with coefficients in
some uniformly bounded Banach representation, \((\pi,B)\), the \emph{Banach length function} defined by \(L: g\in G \mapsto\|b(g)\|\in\BR_+\)
 is Lipschitz w.r.t. any word norm on \(G\),
i.e.~for any open, relatively compact generating subset \(S \subset G\),
there exists \(C > 0\), s.t. \(\forall g \in G, L(g) \leq C|g|_{S}\).
\end{ob}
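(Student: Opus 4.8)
The plan is to exploit the cocycle identity together with the uniform bound on the representation, and to reduce everything to the behaviour on the generating set $S$. First I would set $M = \sup_{g\in G}\|\pi(g)\|_{op} < \infty$, which is finite by hypothesis, and $C_0 = \sup_{s\in S}\|b(s)\|$; the latter is finite because $b$ is continuous and $S$ is relatively compact, so $b$ is bounded on the closure $\overline{S}$. I would then take $C = M C_0$ (or $(M+1)C_0$ to absorb boundary issues, but $MC_0$ suffices since any $g$ with $|g|_S = n$ lies in $S^n$).

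The core step is an induction on $n = |g|_S$. Writing $g = s_1 s_2 \cdots s_n$ with each $s_i \in S$, iterating the cocycle relation $b(gh) = b(g) + \pi(g)b(h)$ gives
\[
b(s_1\cdots s_n) = \sum_{k=1}^{n} \pi(s_1\cdots s_{k-1})\, b(s_k),
\]
with the convention that the empty product is the identity. Taking norms and using $\|\pi(s_1\cdots s_{k-1})\|_{op} \le M$ (here one should note $M \ge 1$ automatically, since $\pi(e) = \mathrm{id}$) together with $\|b(s_k)\| \le C_0$ yields $\|b(g)\| \le n M C_0 = C\,|g|_S$. This establishes the claim with $C = MC_0$.

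The only genuine subtlety — and the one step worth stating carefully rather than the routine induction — is the finiteness of $C_0$. One should check that $b$ restricted to a neighbourhood of the identity is bounded: continuity of $b$ gives a neighbourhood $U$ of $e$ on which $\|b\| \le 1$, and since $S$ has compact closure one can cover $\overline{S}$ by finitely many translates $g_i U$, whence $\|b(g_i u)\| \le \|b(g_i)\| + M$ by the cocycle relation, so $\sup_{\overline{S}}\|b\| < \infty$. (Alternatively, invoke that a cocycle into any Banach representation, being continuous, is automatically bounded on compact sets.) Once $C_0 < \infty$ is in hand, no obstacle remains: the inequality $L(g) \le C|g|_S$ holds for every $g \in G$, not merely outside a compact set, so the Lipschitz bound is global as stated.
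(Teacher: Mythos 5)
Your argument is correct. The paper states this Observation without proof, and your telescoping of the cocycle identity, $b(s_1\cdots s_n)=\sum_{k=1}^{n}\pi(s_1\cdots s_{k-1})b(s_k)$, combined with the uniform bound $M$ on $\|\pi(\cdot)\|_{op}$ and the bound $C_0$ on $\|b\|$ over $S$, is exactly the standard argument the authors intend. One small simplification: since $b$ is continuous and $\overline{S}$ is compact, $b(\overline{S})$ is compact and hence bounded, so the covering argument with translates $g_iU$ is unnecessary (though harmless).
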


The modulus of uniform smoothness of a Banach space \(B\) is defined for
\(\tau>0\) as
\[\rho_B(\tau)=\sup \left\{\frac{\|u+v\|+\|u-v\|}{2}-1: \|u\| \leq 1 \text{ and } \|v\| \leq \tau \right\}.\]

If \(\lim _{\tau \rightarrow 0} \frac{\rho_{B}(\tau)}{\tau}=0\), then
\(B\) is said to be \emph{uniformly smooth}. If in addition there exists
a constant \(K\) such that \(\rho_B(\tau) \leq K \tau^p\) for some
\(p>1\), \(p \leq 2\), then \(B\) is said to be \emph{\(p\)-uniformly
smooth}.

\begin{remark}
\label{d20809}
A deep theorem of Pisier \cite{Pisier1975} states that if \(B\) is
uniformly smooth, then there exists some \(1<p \leq 2\) such that \(B\)
admits an equivalent norm that is \(p\)-uniformly smooth. In fact, up to
isomorphism, this class coincides with super-reflexive Banach spaces
\cite[Theorem~A.6]{Benyamini2000}.
\end{remark}

Generalizing ~\cite[Theorem~1.2]{Cornulier2007} beyond cocycles with coefficients in unitary representations, we establish the following amenability criterion:

\begin{theorem}
\label{da6dad}
Let \(\Gamma\) be a finitely generated, discrete group and \(S\) a
finite generating set of \(\Gamma\). Let \((\pi,B)\) be uniformly bounded representation on a \(p\)-uniformly
smooth Banach space \(B\). If \(\Gamma\) admits a cocycle, \(b\), with
coefficients in  \((\pi,B)\)
such that \[\|b(g)\| \succ |g|_{S}^\frac{1}{p},\] then \(\Gamma\) is amenable.
\end{theorem}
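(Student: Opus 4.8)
The plan is to adapt the Cornulier--Tessera--Valette argument to the $p$-uniformly smooth setting, replacing the Hilbert-space parallelogram identity with the modulus of smoothness inequality. Recall the underlying idea in the unitary case: if $\Gamma$ is non-amenable, it has a positive Cheeger constant, and one shows that any cocycle $b$ must grow sublinearly --- more precisely, one derives from non-amenability a "super-square-root" upper bound $\|b(g)\| \preceq |g|_S^{1/2}$ that contradicts the hypothesis. So I would argue the contrapositive: assume $\Gamma$ is non-amenable and deduce $\|b(g)\| \preceq |g|_S^{1/p}$ (or a slightly weaker $o(|g|_S^{1/p})$-type bound with a uniform constant), contradicting $\|b(g)\| \succ |g|_S^{1/p}$.

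**The main mechanism: a heat-flow / averaging inequality.**

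First I would fix a finitely supported symmetric probability measure $\mu$ on $\Gamma$, say uniform on $S$, and consider the Markov operator acting on $B$-valued functions. Non-amenability gives a spectral gap for the random walk, i.e. $\|\mu^{*n}\|$ decays exponentially on the space of mean-zero $\ell^2$ functions; the key point is to transfer this to an estimate on the cocycle. For a cocycle $b$ with values in $(\pi, B)$, consider the vector $b_n := \sum_g \mu^{*n}(g)\, b(g)$ or rather the "energy" $\|b\|_n^2$-type quantity. The crucial computation is the discrete derivative: using the cocycle identity $b(gh) = b(g) + \pi(g)b(h)$ together with $p$-uniform smoothness, one gets a controlled expansion of $\|b(g) + \pi(g)v\|^p$-type expressions. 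Concretely, $p$-uniform smoothness yields, for all $u,v \in B$,
\[
\|u+v\|^p + \|u-v\|^p \le 2\|u\|^p + 2K^p\|v\|^p ,
\]
which is the right substitute for the parallelogram law and allows one to run a "Markov type $p$" argument. The heart of the proof is then to combine (i) the spectral gap from non-amenability, which controls $\sum_g \mu^{*n}(g)\|b(g)\|^p$ from above by something growing like $n$ times a constant (linearly in $n$, not faster), with (ii) a lower bound showing $\sum_g \mu^{*n}(g)\|b(g)\|^p \gtrsim$ (value of $\|b\|^p$ at scale $\sqrt{n}$ under the walk). Balancing the two gives $\|b(g)\|^p \preceq |g|_S$, i.e. $\|b(g)\| \preceq |g|_S^{1/p}$.

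**Carrying it out.**

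In order, the steps would be: (1) reduce to the contrapositive and fix notation --- non-amenability, the Markov operator $M$ on $\ell^2(\Gamma, B)$ with norm gap $\|M\|_{B,0} \le 1-\varepsilon$ on mean-zero functions, using that $p$-uniform smoothness makes $B$ (super-)reflexive so $\ell^2(\Gamma,B)$ behaves well; (2) establish the smoothness inequality above and its iterated/averaged version, the "Markov type $p$" estimate for $B$-valued functions along the random walk --- this is where Naor--Peres-type arguments or Pisier's martingale inequalities enter; (3) show that the cocycle, viewed appropriately, is an eigenfunction-like object for the shift so that $\|\pi(g)b(h) - b(g)\|$-increments are controlled by the walk, converting the spectral gap into a bound $\sum_g \mu^{*n}(g)\,\|b(g)\|^p \lesssim n$; (4) produce the matching lower bound via the triangle inequality along geodesics / the fact that the walk at time $n \asymp |g|_S^2$ has non-negligible mass near $g$... actually more carefully, use that for fixed $g$ one can test against $n \asymp$ distance, yielding $\|b(g)\|^p \lesssim |g|_S$; (5) conclude $\|b(g)\| \preceq |g|_S^{1/p}$, contradicting the hypothesis, hence $\Gamma$ is amenable.

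**The main obstacle.**

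The hard part is step (2)--(3): making the spectral-gap-to-cocycle transfer work with $B$ merely $p$-uniformly smooth rather than Hilbert. In the Hilbert case one uses the clean identity relating $\|b\|$ along the walk to $\langle (I-M)b, b\rangle$ and the functional calculus; with a Banach space one loses the inner product and must instead run a nonlinear martingale / Markov-type $p$ argument, controlling the $p$-th power of increments and summing them --- the exponent $1/p$ in the theorem is exactly the signature that the parallelogram law has been replaced by the $p$-smoothness inequality, so pinning down the correct constants and the correct averaged inequality (the analogue of "Markov type $2$ with constant independent of the graph") is the delicate technical core. I would expect to invoke Pisier's inequality $\|\,\|u+v\|^p + \|u-v\|^p\,\| \le 2\|u\|^p + 2K^p\|v\|^p$ repeatedly and a telescoping estimate along the walk, and the bookkeeping to keep the constant $C$ uniform over $g$ is where the real work lies.
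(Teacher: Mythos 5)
Your toolkit (the two–point $p$-smoothness inequality, Naor--Peres Markov type $p$, random walks) overlaps with the paper's, but the logical skeleton has a genuine gap. The contrapositive you aim for --- ``non-amenability implies the pointwise bound $\|b(g)\|\preceq|g|_S^{1/p}$'' --- is false: on the free group, the cocycle coming from an unbounded homomorphism to $\mathbb{R}$ (trivial, hence isometric, coefficients) grows linearly along a cyclic direction, so no pointwise upper bound with exponent $1/p<1$ can follow from non-amenability alone. What Markov type gives is only the averaged bound $\mathbb{E}\big[\|b(W_n)\|^p\big]\lesssim n\,\mathbb{E}\big[\|b(W_1)\|^p\big]$, and this holds for \emph{all} finitely generated groups --- it is a property of the $p$-uniformly smooth target, not a consequence of the spectral gap, contrary to your step (i). Your step (4), which tries to localize the averaged bound to a single $g$ by claiming the walk at time $n\asymp|g|_S^2$ has non-negligible mass near $g$, fails precisely in the non-amenable case: the walk has positive linear speed, so the diffusive scale $\sqrt{n}$ is the wrong one and such probabilities are exponentially small.

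The missing idea is how the hypothesis $\|b\|\succ|\cdot|_S^{1/p}$ and non-amenability actually interact. In the paper, non-amenability enters only through the almost sure linear rate of escape of the walk (there is $\lambda>0$ with $\mathbb{P}(|W_n|\ge\lambda n)\ge\tfrac12$ for large $n$, via Kesten/Woess), while the hypothesis is upgraded to a quantitative form (Lemma \ref{87d5ac}: an increasing $\eta\to\infty$ with $|g|\,\eta^p(|g|)\le\|b(g)\|^p$); combining these with the nonnegativity of the covariance of monotone functions of $|W_n|$ yields the superlinear lower bound $\mathbb{E}\big[\|b(W_n)\|^p\big]\gtrsim n\,\eta^p(\lambda n)$, which contradicts the Markov type upper bound. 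Your proposal contains no substitute for this lower-bound step, which is where non-amenability and the growth hypothesis are actually used. A secondary omission: before invoking Naor--Peres one must pass from a uniformly bounded to an isometric representation; the paper does this by the renorming of Lemma \ref{b8e205} (dualizing a $q$-uniformly convex invariant norm), a reduction your outline never addresses.
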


As far as the authors know this result is new even for
isometric Banach representations  on $L^r$-spaces, $r\neq1,\infty$, and improves the result of Naor and
Peres \cite{Naor2008} in the critical case when the equivariant
compression exponent is equal to \(\frac{1}{p}\).
As a consequence we obtain that Nica's construction in \cite{Nica2013} of affine actions for non-elementary hyperbolic groups on $L^p$-spaces with $p\neq 1,\infty$ large, has optimal growth.

Theorem \ref{da6dad} also implies, as in the unitary case, that the best possible behavior one
can expect for a cocycle \(b\) with coefficients in a uniformly bounded
Hilbert representation is \[\|b(g)\| \sim |g|_{S}^{1/2}.\]

Extending the list of groups admitting affine actions on Hilbert spaces with optimal growth discussed in \cite[Proposition~3.9]{Cornulier2007}, we prove
the following theorem:

\begin{theorem}
\label{6ea617}
Let \(G\) be a connected, simple, real rank \(1\) Lie group with finite
center and let \((\pi,H)\) be a uniformly bounded Hilbert representation
with a spectral gap. If \(H^1(\pi,H)\neq0\), then there exists
\(b \in Z^{1}(\pi,H)\) such that \[\|b\|\sim d^\frac{1}{2}(\cdot,e),\]
where \(d\) stands for the orbital distance induced by the canonical geometry of \(G/K\) with
\(K\) a compact maximal of \(G\).
\end{theorem}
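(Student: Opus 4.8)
\emph{Strategy.} Fix the basepoint $o=eK\in G/K$. Since $K$ is compact, $G$ equipped with a word metric is quasi-isometric to $(G/K,d)$, so word length and orbital distance are coarsely equivalent; restricting any cocycle to a cocompact lattice $\Gamma<G$ (finitely generated, and non-amenable since $G$ is simple of real rank one) therefore lets Theorem~\ref{da6dad} apply, and shows that no cocycle into any uniformly bounded Hilbert representation can grow faster than $d^{1/2}(\cdot,e)$. The content of the theorem is thus to \emph{produce} one attaining this rate. The plan has three steps: (i) reduce $(\pi,H)$ to a concrete model on the Furstenberg boundary, using the spectral gap together with the rank-one structure; (ii) write down the relevant cocycle in this model; (iii) reduce the growth statement to a geometric estimate on Busemann functions of $G/K$.

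\emph{Step (i): reduction to a boundary representation.} Because $\pi$ has a spectral gap, reduced and unreduced cohomology coincide, $B^1(\pi,H)$ is closed in $Z^1(\pi,H)$, and the given nonzero class admits a $\mu$-harmonic representative $b$ for any symmetric, adapted, compactly supported probability measure $\mu$ on $G$. Here the rank-one hypothesis enters through the structure theory of first cohomology of rank-one simple Lie groups with spectral-gap coefficients: in the unitary case this is classical and underlies~\cite{Cornulier2007}, while in the genuinely non-unitary Property~(T) cases it rests on the uniformly bounded analytic continuation of the complementary series (Cowling) and on Nishikawa's construction~\cite{Nishikawa2020_preprint}. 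The outcome is that $\pi$ contains a closed invariant subspace on which it is isomorphic to the critical \emph{boundary representation} $(\sigma,V)$, where $V$ is a space of functions on $\partial(G/K)=G/P$ carrying a Sobolev-type inner product given by the intertwining kernel at the edge of the complementary series, $\sigma(g)\varphi(\xi)=P_g(\xi)^{c}\varphi(g^{-1}\xi)$ with $P_g$ the Poisson (Jacobian) kernel and $c$ the critical exponent, and that the $\mu$-harmonic cocycle takes its values in this subspace. This $\sigma$ is unitary precisely for $\mathrm{SO}(n,1)$ and $\mathrm{SU}(n,1)$, and genuinely uniformly bounded for $\mathrm{Sp}(n,1)$ and $F_4^{-20}$, consistently with Property~(T).

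\emph{Steps (ii)--(iii): the Busemann cocycle and the geometric estimate.} In this model the nonzero class is represented, up to a coboundary and a nonzero scalar, by the \emph{Busemann cocycle}
\[
b(g)(\xi)=\log P_g(\xi)=-\,\beta_\xi(o,go),
\]
where $\beta_\xi$ is the Busemann function of $G/K$ based at $\xi$; one checks directly that $b\in Z^1(\sigma,V)$ and that $b$ is unbounded, hence not a coboundary. Consequently its norm reduces to an explicit integral,
\[
\|b(g)\|_V^2\;\asymp\;\iint_{\partial(G/K)^2} k(\xi,\eta)\,\beta_\xi(o,go)\,\beta_\eta(o,go)\;d\xi\,d\eta,
\]
where $k$ is the logarithmically singular kernel of the critical intertwiner, which annihilates the additive constant in $\beta$. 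Writing $r=d(o,go)$ and letting $u\in\partial(G/K)$ be the attracting endpoint of the geodesic ray from $o$ through $go$, the Busemann function on the rank-one symmetric space has the capped-logarithmic profile $\beta_\xi(o,go)-\overline{\beta}\asymp\log\!\big(1/\max(\operatorname{dist}(\xi,u),e^{-r})\big)$ up to bounded error; measured against the critical kernel, this profile has squared norm $\asymp r$ — the parabolic rescaling $\operatorname{dist}(\cdot,u)\mapsto e^{r}\operatorname{dist}(\cdot,u)$ turns the integral into $r$ times a convergent, $r$-independent integral. It is precisely this balance between the logarithmic profile and the critical homogeneity of the kernel that forces the exponent to be $\tfrac12$ rather than $1$ or $0$. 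Hence $\|b(g)\|_V^2\asymp d(g,e)$ from both sides; transporting back through the inclusion $\sigma\hookrightarrow\pi$ and projecting onto the $\sigma$-isotypic component yields $b\in Z^1(\pi,H)$ with $\|b\|\sim d^{1/2}(\cdot,e)$, the upper bound being inherited from the model (alternatively from a Markov-type argument using harmonicity together with Observation~\ref{466efb}), the lower bound from the projection, and the optimality being the application of Theorem~\ref{da6dad} recorded above.

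\emph{Main obstacle.} Two points carry the weight. The first is Step~(i): establishing, in the uniformly bounded regime where no classification of representations is available, that a spectral-gap class over a rank-one group is carried by the boundary representation. The second is the sharp, two-sided nature of the estimate in Step~(iii), where the contribution of the thin shell $\operatorname{dist}(\xi,u)\lesssim e^{-r}$ around the attracting point must be controlled against the singular intertwining kernel from both above and below — it is here that the rank-one geometry (Gromov hyperbolicity of $G/K$ and the precise behaviour of horospheres) is indispensable, and where the exponent $\tfrac12$ is actually pinned down.
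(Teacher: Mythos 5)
The central gap is your Step (i). The theorem quantifies over an \emph{arbitrary} uniformly bounded Hilbert representation $(\pi,H)$ with a spectral gap and $H^1(\pi,H)\neq0$, whereas your argument only runs after replacing $(\pi,H)$ by the critical boundary representation $\sigma$ on $G/P$ and asserting that the nonzero class is carried, up to coboundary and scalar, by the Busemann cocycle inside a $\sigma$-isomorphic closed invariant subspace of $H$. No such reduction is available: there is no classification of uniformly bounded representations (as you acknowledge), and even in the unitary case the assertion that a spectral-gap class must live on the endpoint boundary representation is a deep classification statement that you neither cite precisely nor prove. Cowling's analytic continuation and Nishikawa's construction \emph{produce examples} of representations with these properties; they say nothing about an arbitrary $(\pi,H)$ satisfying the hypotheses. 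Your closing ``projection onto the $\sigma$-isotypic component'' likewise presupposes a complemented invariant subspace with a bounded equivariant projection, which is unjustified outside the unitary setting. As written, the argument would at best give the growth estimate for one specific model representation, not the theorem.

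There is also a logical slip in how you propose to get the upper bound ``from Theorem~\ref{da6dad}'' via a lattice restriction: the negation of $\|b(g)\|\succ|g|^{1/2}$ is not $\|b(g)\|\preceq|g|^{1/2}$, so non-amenability only excludes cocycles that \emph{dominate} $|g|^{1/2}$ at infinity and cannot yield the two-sided asymptotics $\sim$. The paper avoids the representation-theoretic reduction entirely: the spectral gap is used to produce a $K$-bi-invariant averaging density and hence a smooth, $G$-equivariant harmonic representative $F:G/K\to H$ of the class (Propositions~\ref{5c6adf} and~\ref{74e9c2}); then $G$-equivariance, unitarization of the $K$-action, uniform boundedness of $\pi$ and nonconstancy of $F$ give uniform two-sided bounds on $\|d_xF\|_{\mathrm{HS}}$ (Proposition~\ref{2a47dc}); finally, in rank one, the radial ODE $\varphi''+m(r)\varphi'=2\eta$ coming from $\Delta\|F\|^2=-2\|d_xF\|_{\mathrm{HS}}^2$ is integrated to give $\|F\|^2\sim d(\cdot,x_0)$ from both sides. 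Your instinct to pass to a harmonic representative is exactly the paper's first move; the missing idea is that harmonicity together with the uniform control of the differential already pins the exponent $\tfrac12$, with no need to identify $\pi$ with a boundary model or to estimate any intertwining kernel.
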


\begin{remark}
A uniformly bounded Hilbert representation has a spectral gap if its
restriction to a complement of its invariant vectors admits no almost
invariant vectors. For precise definitions, see Section
\ref{sec:a_smooth_version_of_the_harmonic_representation_of_cocycles}.
We remark that all definitions given there work equally well for
uniformly bounded representations.
\end{remark}

Using Nishikawa's construction \cite{Nishikawa2020_preprint} we conclude
the paper with the following Corollary, proved in Section
\ref{sec:proof_of_corollary_ref99b6bf}.

\begin{corollary}
\label{99b6bf}
For all Kazhdan groups \(\mathrm{Sp}(n,1)\) with \(n\ge2\), there exists
a uniformly bounded Hilbert representation \((\pi,H)\) and a cocycle
\(b \in Z^{1}(\pi,H)\), s.t. \[\|b\|\sim d^\frac{1}{2}(\cdot,e),\] where
\(d\) stands for the orbital distance on the quaternionic hyperbolic
\(n\)-space.
\end{corollary}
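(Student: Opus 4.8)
The plan is to obtain Corollary~\ref{99b6bf} as the specialization of Theorem~\ref{6ea617} to $G=\mathrm{Sp}(n,1)$, the uniformly bounded Hilbert representation being the one underlying Nishikawa's affine action. First one records that the ambient group is of the required type: for $n\ge 2$, $\mathrm{Sp}(n,1)$ is a connected, simple, real rank one Lie group with finite center $\{\pm I\}$, and its symmetric space $G/K$, with $K=\mathrm{Sp}(n)\times\mathrm{Sp}(1)$ a maximal compact subgroup, is precisely quaternionic hyperbolic $n$-space. Hence the orbital distance $d(\cdot,e)$ in Theorem~\ref{6ea617} is exactly the one named in the Corollary, and it remains only to exhibit a uniformly bounded Hilbert representation $(\pi,H)$ of $\mathrm{Sp}(n,1)$ that has a spectral gap and satisfies $H^1(\pi,H)\neq 0$.

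For this, let $(\pi,H)$ be the linear part of the metrically proper affine action of $\mathrm{Sp}(n,1)$ on a Hilbert space constructed by Nishikawa \cite{Nishikawa2020_preprint}; by construction $\pi$ is uniformly bounded (though not unitary). The affine action supplies a cocycle $b_0\in Z^1(\pi,H)$ which is metrically proper and hence, since $\mathrm{Sp}(n,1)$ is non-compact, unbounded. On the other hand, for a uniformly bounded representation every coboundary $g\mapsto \pi(g)v-v$ has norm at most $\bigl(1+\sup_{g}\|\pi(g)\|_{op}\bigr)\|v\|$, so it is bounded; therefore $b_0\notin B^1(\pi,H)$ and $H^1(\pi,H)\neq 0$.

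The remaining — and, I expect, only delicate — point is to verify that $\pi$ has a \emph{spectral gap} in the sense of the remark following Theorem~\ref{6ea617}: the restriction of $\pi$ to a closed complement of its space of invariant vectors must admit no almost invariant vectors. This cannot be deduced from Property~(T) alone, because Nishikawa's representation is not similar to any unitary representation (otherwise its proper cocycle could be untwisted to a unitary one, contradicting $H^1$-vanishing for unitary coefficients). Instead one uses the concrete form of $\pi$, which is a uniformly bounded deformation of a tempered boundary (principal/complementary series) representation of $\mathrm{Sp}(n,1)$: the explicit kernel decay in \cite{Nishikawa2020_preprint} gives decay of matrix coefficients along $G$, and a standard argument then rules out a sequence of locally almost invariant unit vectors off the invariant part — equivalently, this representation does not weakly contain the trivial representation there. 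Reconciling Nishikawa's construction with the spectral-gap hypothesis of Section~\ref{sec:a_smooth_version_of_the_harmonic_representation_of_cocycles} is the main obstacle; once it is in place, all hypotheses of Theorem~\ref{6ea617} are met.

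Applying Theorem~\ref{6ea617} to $(\pi,H)$ then produces $b\in Z^1(\pi,H)$ with $\|b\|\sim d^{1/2}(\cdot,e)$, where $d$ is the orbital distance attached to the canonical geometry of $G/K$, i.e.\ of quaternionic hyperbolic $n$-space; this is the assertion of the Corollary. (That the exponent $\tfrac12$ is optimal is a separate consequence of Theorem~\ref{da6dad}, since $\mathrm{Sp}(n,1)$ has Property~(T) and is in particular non-amenable, but this is not part of the statement.)
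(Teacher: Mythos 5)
Your overall strategy is the paper's: specialize Theorem~\ref{6ea617} to $G=\mathrm{Sp}(n,1)$ with $(\pi,H)$ the uniformly bounded representation underlying Nishikawa's construction, and your verification that $H^1(\pi,H)\neq 0$ (unbounded cocycle versus bounded coboundaries for a uniformly bounded representation) is fine. But the decisive hypothesis of Theorem~\ref{6ea617} --- the spectral gap --- is exactly the point you leave open, and the route you sketch for it does not work as stated. Decay of matrix coefficients does not by itself rule out almost invariant vectors: the regular representation of $\mathbb{Z}$ has matrix coefficients vanishing at infinity yet has almost invariant vectors, so ``$C_0$-ness'' is not enough, and the usual strengthenings (Property (T) for representations without invariant vectors, or temperedness plus non-amenability) are arguments about \emph{unitary} representations and are unavailable here precisely because $\pi$ is only uniformly bounded. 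Declaring this ``the main obstacle'' and appealing to ``a standard argument'' therefore leaves a genuine gap rather than closing it.

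The paper closes this gap by a much more elementary mechanism, using the concrete model $\pi_0$ on $C^{\infty}(\partial X)/\mathbb{C}1_{\partial X}$ with Nishikawa's Euclidean norm. First, by Lemma~\ref{b8e205} the uniformly bounded norm is replaced by an equivalent $G$-invariant uniformly smooth norm, so that Lemma~\ref{418062} applies. That lemma says: if the subspace of $K$-fixed vectors is finite dimensional, then either the representation has no almost invariant vectors or it has a nonzero invariant vector. Since $K$ acts transitively on $\partial X$, the $K$-invariant smooth functions on $\partial X$ are the constants, so the $K$-fixed subspace of the quotient is trivial; and since the only left-translation invariant functions are again the constants, which have been quotiented out, $\pi_0$ has no nonzero invariant vectors. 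Hence $\pi_0$ has a spectral gap, with no analysis of matrix coefficients needed. If you want to complete your write-up, this compact-subgroup averaging argument (or an equivalent substitute) is the missing ingredient you must supply before invoking Theorem~\ref{6ea617}.
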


\subsubsection{Organisation of the paper}
\label{sec:organisation_of_the_paper}

The paper is organized as follows. In Section
\ref{sec:optimal_compression_exponent}, we establish Theorem
\ref{da6dad}. Section
\ref{sec:a_smooth_version_of_the_harmonic_representation_of_cocycles} is
devoted to a smooth harmonic representation theorem for uniformly convex
Banach representations. Finally, in Section
\ref{sec:the_hilbert_space_case}, we prove Theorem \ref{6ea617}; the
existence part relies on the results of Section
\ref{sec:a_smooth_version_of_the_harmonic_representation_of_cocycles}.

\subsubsection{Acknowledgements}
\label{sec:acknowledgements}

We would like to thank Pierre Pansu and Alain Valette for fruitful
discussions, and John Mackay for pointing us to the work of Naor and
Peres. The first author was supported by EPSRC Standard Grant
EP/V002899/1.

\section{Optimal compression exponent}
\label{sec:optimal_compression_exponent}

Throughout this section, let \(E\) denote a \(p\)-uniformly smooth
Banach space (necessarily \(1 <p \leq 2\) as of Remark \ref{d20809}).
There exists the following dual notion.

\begin{definition}
The \emph{modulus of convexity} \(\delta_B(\varepsilon)\) of a Banach
space \(B\) is defined as:
\[\delta_B(\varepsilon) = \inf \left\{ 1 - \frac{\|u + v\|}{2} : \|u\|, \|v\| \leq 1 \text{ and } \|u - v\| \geq \varepsilon \right\}, \quad 0 < \varepsilon \leq 2.\]
\end{definition}

If \(\delta_B(\varepsilon) > 0\) for all \(\varepsilon > 0\), then \(B\)
is said to be \emph{uniformly convex}. If
\(\delta_B(\varepsilon) \geq c \varepsilon^q\) for some \(q \geq 2\),
then \(B\) is said to be \emph{\(q\)-uniformly convex}.

A Banach space \(B\) is \(p\)-uniformly smooth if and only if its dual
\(B^*\) is \(q\)-uniformly convex, where
\(\frac{1}{p} + \frac{1}{q} = 1\). This follows from the relation
\[\rho_{\mathrm{B}}(\tau)=\sup _{0 \leq \varepsilon \leq 2}\left(\tau \varepsilon / 2-\delta_{\mathrm{B}^{*}}(\varepsilon)\right) \quad(\tau>0)\]
due to Lindenstrauss \cite[Theorem~1]{Lindenstrauss1963}.

\begin{lemma}
\label{b8e205}
Let \(G\) be a group acting on \(E\) by uniformly bounded operators.
Then there exists an equivalent \(p\)-uniformly smooth, \(G\)-invariant
norm on \(E\).
\end{lemma}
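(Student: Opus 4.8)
The plan is to renorm $E$ by averaging the original $p$-uniformly smooth norm over the (possibly non-compact, but uniformly bounded) $G$-action, and then check that all the required properties survive. Write $\|\cdot\|$ for the given $p$-uniformly smooth norm and let $M = \sup_{g \in G}\|\pi(g)\|_{op} < \infty$. The natural candidate is
\[
\vertiii{v} = \sup_{g \in G} \|\pi(g) v\|.
\]
First I would verify that this is an equivalent norm: the sup is finite since $\|\pi(g)v\| \le M\|v\|$, so $\vertiii{v} \le M\|v\|$; conversely $\vertiii{v} \ge \|\pi(e)v\| = \|v\|$, so $\|v\| \le \vertiii{v} \le M\|v\|$. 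It is a norm (homogeneity and the triangle inequality pass to suprema, and $\vertiii{v}=0$ forces $\|v\|=0$). $G$-invariance is immediate from the group law: $\vertiii{\pi(h)v} = \sup_g \|\pi(gh)v\| = \vertiii{v}$.

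The substantive point — and the one I expect to be the main obstacle — is that $\vertiii{\cdot}$ remains $p$-uniformly smooth, i.e. $\rho_{(E,\vertiii{\cdot})}(\tau) \le K'\tau^p$ for some constant $K'$. A supremum of $p$-uniformly smooth norms need not be $p$-uniformly smooth in general, so one cannot argue norm-by-norm naively; the uniform bound $M$ must be used quantitatively. The cleanest route is to pass to the dual formulation via Lindenstrauss's duality quoted in the excerpt: $(E,\vertiii{\cdot})$ is $p$-uniformly smooth iff its dual is $q$-uniformly convex with a uniform constant, where $1/p+1/q=1$. Equivalently, and perhaps more directly, I would use the standard fact that $p$-uniform smoothness of a norm $N$ is equivalent to the existence of a constant $C$ with
\[
\tfrac{1}{2}\bigl(N(u+v)^p + N(u-v)^p\bigr) \le N(u)^p + C^p N(v)^p \qquad \text{for all } u,v \in E,
\]
(the functional form of $p$-smoothness; see Ball--Carlen--Lieb). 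This inequality has the crucial feature of being stable under the operation $v \mapsto \sup_g \|\pi(g)v\|$ after raising to the $p$-th power: if each $\|\pi(g)\cdot\|^p$ satisfies such an inequality with a fixed constant (which it does, since $\|\pi(g)(u\pm v)\| = \|\pi(g)u \pm \pi(g)v\|$ and $\|\cdot\|$ is $p$-uniformly smooth with constant $C_0$, independent of $g$), then taking the supremum over $g$ on the left and bounding each term on the right by its own supremum preserves the inequality with the \emph{same} constant $C_0$.

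So the key steps, in order, are: (1) define $\vertiii{v} = \sup_g\|\pi(g)v\|$ and check it is an equivalent norm sandwiched between $\|\cdot\|$ and $M\|\cdot\|$; (2) check $G$-invariance from the cocycle/homomorphism identity; (3) recall the functional characterization of $p$-uniform smoothness in terms of the $p$-th power inequality above, with its constant depending only on the smoothness modulus of $\|\cdot\|$; (4) observe that this inequality is inherited by $\vertiii{\cdot}^p$ because each $\|\pi(g)\cdot\|^p$ satisfies it uniformly in $g$ and the inequality survives taking suprema; (5) conclude $\vertiii{\cdot}$ is $p$-uniformly smooth. Step (3)–(4) is where the real content lies; the rest is bookkeeping. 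One should also note in passing that strong continuity of $\pi$ guarantees $v \mapsto \vertiii{v}$ is still a genuine (continuous) norm making $E$ a Banach space isomorphic to the original, so no completeness issue arises.
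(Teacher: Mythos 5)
There is a genuine gap at your step (4), and it is fatal to the route you chose. Write $\|v\|':=\sup_{g\in G}\|\pi(g)v\|$ and $N_g(v):=\|\pi(g)v\|$. The functional form of $p$-uniform smoothness you invoke controls, for each \emph{fixed} $g$, the quantity $\tfrac12\bigl(N_g(u+v)^p+N_g(u-v)^p\bigr)$ by $N_g(u)^p+C_0^pN_g(v)^p\le \|u\|'^p+C_0^p\|v\|'^p$. Taking the supremum over $g$ of the left-hand side only bounds $\sup_g\bigl(N_g(u+v)^p+N_g(u-v)^p\bigr)$, whereas what you need is a bound on $\tfrac12\bigl(\|u+v\|'^p+\|u-v\|'^p\bigr)=\tfrac12\bigl(\sup_g N_g(u+v)^p+\sup_h N_h(u-v)^p\bigr)$, where the two suprema are attained (or approached) at \emph{different} group elements; the inequality $\sup_g(A_g+B_g)\le\sup_g A_g+\sup_g B_g$ goes the wrong way, so the per-$g$ inequality does not pass to $\|\cdot\|'$. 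This is not merely a presentational issue: the conclusion you are aiming for is false. Take $G=\mathbb{Z}/2\mathbb{Z}$ acting on $\mathbb{R}^2$ with the Euclidean norm via the involution $T=\begin{pmatrix}1&1\\0&-1\end{pmatrix}$ (uniformly bounded, $T^2=I$). Then $\|v\|'=\max(\|v\|,\|Tv\|)$ has unit ball $B\cap TB$, the intersection of two distinct ellipses of equal area, whose boundary has corners (e.g.\ at $(1,0)$ the two boundary curves cross transversally); a norm that is not differentiable at a point of its unit sphere is not uniformly smooth, let alone $p$-uniformly smooth. So the direct supremum on $E$ cannot work, precisely because uniform smoothness, unlike uniform convexity, is destroyed by taking suprema of norms.

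The paper's proof sidesteps this by dualizing: since $E$ is $p$-uniformly smooth, $E^*$ is $q$-uniformly convex ($\tfrac1p+\tfrac1q=1$, via the Lindenstrauss duality formula quoted in the text), one puts the supremum norm $\|\xi\|^{*\prime}=\sup_{g}\|g\xi\|^*$ on $E^*$ for the dual action — your steps (1)--(2) apply verbatim there — and the point is that $q$-uniform convexity \emph{is} stable under such suprema of uniformly equivalent norms with a common modulus (this is the content of the argument in \cite[Proposition~2.3]{Bader2007}, roughly because an intersection of uniformly convex balls of comparable size is uniformly convex). Dualizing back, using reflexivity ($E^{**}=E$), yields a $G$-invariant norm on $E$ that is $p$-uniformly smooth and equivalent to the original. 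If you want to salvage your outline, the repair is exactly this detour through $E^*$: keep the averaging-by-supremum idea, but apply it to the dual norm where the property to be preserved is convexity rather than smoothness.
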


\begin{proof}
Since \((E,\|\cdot\|)\) is \(p\)-uniformly smooth, \((E^*,\|\cdot\|^*)\)
is \(q\)-uniformly convex, where \(\frac{1}{p} + \frac{1}{q} = 1\).

Define an equivalent \(G\)-invariant norm on \(E^*\) by
\[\|v\|^{*'} := \sup_{g \in G} \|gv\|^*.\] As shown in the proof of
\cite[Proposition~2.3]{Bader2007}, this new norm is also \(q\)-uniformly
convex. Therefore, its dual norm on \(E^{**} = E\) defines a
\(G\)-invariant, \(p\)-uniformly smooth norm, which is equivalent to the
original norm \(\|\cdot\|\).

\end{proof}

We need a more qualitative version of the relative Vinogradov notation.

\begin{lemma}
\label{87d5ac}
Let \(G\) be a locally compact compactly generated group. Given
\(f, h: G \rightarrow \mathbb{R}_{+}\), then \[f \prec h\] if and only
if there exists a measurable, increasing map
\(\eta: \mathbb{R}_{+} \to \mathbb{R}_{+}\) with
\[\lim_{ x \to \infty } \eta(x) = \infty, \:\text{for any diverging sequence},\]
such that for all \(g \in G\), \[f(g)\eta(|g|_{S}) \leq h(g).\] Here
\(|\cdot|_{S}\) denotes the word norm w.r.t. some open relatively
compact generating set \(S\) of \(G\).
\end{lemma}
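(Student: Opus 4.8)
The plan is to prove both implications directly. The "if" direction is essentially immediate: suppose such an $\eta$ exists. Given $\varepsilon > 0$, since $\eta$ is increasing and diverges along every diverging sequence, there is some threshold $R > 0$ with $\eta(x) \geq 1/\varepsilon$ for all $x \geq R$. The set $K = \{g \in G : |g|_S \leq R\}$ is a relatively compact (indeed, contained in $S^{\lceil R\rceil}$) subset of $G$, and outside $K$ we have $f(g) \leq f(g)\,\varepsilon\,\eta(|g|_S) \leq \varepsilon\, h(g)$. Hence $f \prec h$. The only mild care needed is that $K$ need not be closed, but $\overline{K}$ is still compact since $G$ is locally compact and $S$ is relatively compact, so this causes no trouble.

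For the "only if" direction, assume $f \prec h$. The idea is to build $\eta$ by a standard "diagonalization over a sequence of compacta" argument. For each $n \in \mathbb{N}$, apply the definition of $\prec$ with $\varepsilon = 1/n$ to obtain a compact set $K_n \subset G$ such that $f(g) \leq \frac{1}{n} h(g)$ for all $g \notin K_n$. Since each $K_n$ is compact and $S$ generates $G$ with $S$ open, $K_n$ is covered by finitely many of the $S^k$, so there is a radius $r_n$ with $K_n \subset \{g : |g|_S \leq r_n\}$; replacing $r_n$ by $\max(r_1, \dots, r_n, n)$ we may assume $(r_n)_n$ is strictly increasing and $r_n \to \infty$. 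Now define $\eta : \mathbb{R}_+ \to \mathbb{R}_+$ by $\eta(x) = n$ for $x \in [r_n, r_{n+1})$ (and $\eta(x) = 1$, say, for $x < r_1$). This $\eta$ is a step function, hence measurable, it is non-decreasing by construction, and $\eta(x) \to \infty$ as $x \to \infty$. To verify the inequality $f(g)\eta(|g|_S) \leq h(g)$: if $|g|_S < r_1$ the claim is the Lipschitz/trivial bound built into $\eta(x)=1$ — one should in fact set $\eta$ small enough there, but since we only need $\eta$ to be \emph{some} increasing divergent function and the stated inequality, it suffices to note that for $|g|_S < r_1$ we can absorb things by shrinking; more cleanly, set $\eta \equiv 0$ on $[0, r_1)$ so the inequality is vacuous there. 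For $|g|_S \in [r_n, r_{n+1})$ with $n \geq 1$, since $|g|_S \geq r_n$ we have $g \notin K_n$ (as $K_n \subset \{|g|_S \leq r_n - 1\}$ after a harmless shift, or directly $g\notin K_{n}$ since $r_{n}$ is chosen $>$ the radius of $K_{n}$), hence $f(g) \leq \frac{1}{n} h(g)$, i.e. $\eta(|g|_S) f(g) = n f(g) \leq h(g)$.

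The one genuinely fiddly point — and the place to be careful rather than the "hard part" — is reconciling the boundary cases: on the innermost ball $\{|g|_S < r_1\}$ one cannot control $f$ by $h$ at all, so $\eta$ must be defined to kill that region (take $\eta = 0$ there, or start the construction at a large enough index). Also, the phrasing "for any diverging sequence" in the statement just means $\eta(x_k) \to \infty$ whenever $x_k \to \infty$, which for a non-decreasing $\eta$ is equivalent to $\lim_{x\to\infty}\eta(x) = \infty$; the step function above satisfies this by $r_n \to \infty$. No serious obstacle is expected: this is a routine reindexing of the quantifiers in the definition of $\prec$, and the measurability and monotonicity are automatic for the step function constructed. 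One should double-check that relative compactness of the $K_n$ genuinely forces a uniform word-length bound, which follows from $G = \bigcup_k S^k$ being an open cover of the compact set $\overline{K_n}$.
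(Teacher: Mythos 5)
Your proof is correct and follows essentially the same route as the paper's: the forward direction via a threshold radius and the compactness of balls, and the converse by enclosing the compacta from the definition of $\prec$ in balls of increasing radius and defining $\eta$ as a step function vanishing on the innermost ball. Your explicit handling of the boundary issue (choosing $r_n$ strictly larger than the word-radius of $K_n$ so that $|g|_S\ge r_n$ forces $g\notin K_n$) is in fact a bit more careful than the paper's own write-up.
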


\begin{proof}
If such a map \(\eta\) exists, then for any \(k > 0\) there exists a
closed ball \(\bar{B}\) s.t. \(\eta(|g|_{S}) \geq k\) outside of this
ball. Since closed balls are compact w.r.t. \(|\cdot|_{S}\), this
implies \(f \prec h\).

Conversely, assume \(f \prec h\). By definition, for any increasing
sequence \(\{k_n\}\) of positive real numbers, there exists a sequence
of compact sets \(\{K_n\}\) such that \[
f(g) k_n \leq h(g)
\]\\
for all \(g\) outside \(K_n\). Take a sequence \(\{ B_{i} \}\) of nested
balls s.t. \(G = \bigcup_{i}B_{i}\). Let \(i_{1}\) be the minimal index
s.t. \(K_{1} \subset B_{i_{1}}\). Given \(i_{n}\), let
\(i_{n+1} = i_{n} + 1\) if \(K_{n+1} \subset B_{i_{n}}\), otherwise let
\(i_{n+1}\) be the minimal index s.t. \(K_{n+1} \subset B_{i_{n+1}}\).
Let \(r_{i}\) denote the radius of \(B_{i}\). Define a function
\(\eta: \mathbb{R}_{+} \to \mathbb{R}_{+}\) piecewise by\\
\[\eta(x) = \begin{cases}
0 & \text{for } x \leq r_{i_{1}}\\
k_{i_{n}} & \text{for } r_{i_{n}} < x \leq r_{i_{n+1}}.
\end{cases}\]

\end{proof}

\begin{proof}[Proof of Theorem \ref{da6dad}]
Thanks to Lemma \ref{b8e205} we may replace \((\pi,B)\) by an isometric
representation. Let \(S\) be any finite, symmetric generating set,
\(S \subset \Gamma\), and \(\left(W_n\right)_n\) the associated random
walk. Naor and Peres show in \cite[Theorem~2.1]{Naor2008}, that the
image of \(\left(W_n\right)_n\) under a cocycle \(b: G \to B\) satisfies
some Markov type inequality. Namely, for every time
\(n \in \mathbb{N}\),
\[\mathbb{E}\left[\left\|b\left(W_{n}\right)\right\|^{p}\right] \lesssim  n \,  \mathbb{E}\left[\left\|b\left(W_{1}\right)\right\|^{p}\right].\]

Our proof goes by contradiction. Assume \(\Gamma\) is nonamenable and
\(|g|^{\frac{1}{p}} \prec \|b(g)\|\). As \(\Gamma\) is nonamenable,
there exists \(\lambda>0\) such that \[
1=\mathbb{P}\left(\liminf _n \frac{\left|W_n\right|}{n}>\lambda\right) \leq \liminf _n \mathbb{P}\left(\left|W_n\right|>\lambda n\right)
\] (see \cite[Proposition~8.2]{Woess2000} for the existence of
\(\lambda\), the inequality follows from Fatou's Lemma.) Thus for all
measurable positive nondecreasing functions \(\eta\) : \[
\begin{aligned}
\mathbb{E}\left(\eta\left(\left|W_n\right|\right)\right) & \geq \mathbb{E}\left(\eta\left(\left|W_n\right|\right) 1_{\frac{\left|W_n\right|}{n} \geq \lambda}\right) \geq \eta(\lambda n) \mathbb{P}\left(\frac{\left|W_n\right|}{n} \geq \lambda\right) \\
& \geq \frac{1}{2} \eta(\lambda n)
\end{aligned}
\] for \(n\) large enough that only depend on \((\Gamma, S)\).

Choose \(\eta\) as in Lemma \ref{87d5ac}. It is known, that for any
random variable \(X\) and two increasing functions \(f\) and \(g\), the
covariance of \(f(X)\) and \(g(X)\) satisfies
\(\operatorname{Cov}[f(X), g(X)] \geq 0\), for details see
\cite{Schmidt2014}. It follows that \[
\begin{aligned}
\mathbb{E}\left(\left\|b\left(W_n\right)\right\|^p\right) & \geq \mathbb{E}\left(\eta^p\left(\left|W_n\right|\right)\left|W_n\right|\right) \geq_{\mathrm{Cov} \geq 0} \mathbb{E}\left(\left|W_n\right|\right) \mathbb{E}\left(\eta^p\left(\left|W_n\right|\right)\right) \\
& \geq \frac{1}{4} \lambda n \,\eta^p(\lambda n)
\end{aligned}
\] for \(n\) large enough. This contradicts the bound:
\(\mathbb{E}\left(\|b(W_{n})\|^p\right) \lesssim n\) from
\cite[Theorem~2.1]{Naor2008}.

\end{proof}


\section{A smooth version of the harmonic representation of cocycles}
\label{sec:a_smooth_version_of_the_harmonic_representation_of_cocycles}

Let \(G\) be a connected semi-simple Lie group with finite center and
\(K\) a maximal compact subgroup of \(G\). Let \(E\) be a uniformly
convex Banach space and \((\pi_{E},E)\) an isometric Banach
representation of \(G\).

A map \(F: G/K \to E\) is harmonic, if it is at least twice continuously
differentiable and \(\Delta F = 0\) for the Laplacian \(\Delta\) on
\(G/K\).

The goal of this section is to show that any cocycle in a uniformly
convex Banach representation with a spectral gap has a harmonic
representative.

\begin{definition}
Let \(G\) be a locally compact topological group, and \((\pi, B)\) an
isometric Banach representation.

\begin{enumerate}
\def\labelenumi{\arabic{enumi}.}
\tightlist
\item
  For a subset \(Q\) of \(G\) and real number \(\varepsilon>0\), a
  vector \(v\) in \(B\) is \emph{\((Q, \varepsilon)\)-invariant} if
  \[\sup _{g \in Q}\|\pi(g) v-v\|<\varepsilon\|v\|.\]
\item
  The representation \((\pi, B)\) \emph{almost has invariant vectors} if
  it has \((Q, \varepsilon)\)-invariant vectors for every compact subset
  \(Q\) of \(G\) and every \(\varepsilon>0\).
\item
  The representation \((\pi, B)\) \emph{has non-zero invariant vectors}
  if there exists \(v \neq 0\) in \(B\) such that \(\pi(g) v=v\) for all
  \(g \in G\). If this holds, we write \(1_G \subset \pi\).
\end{enumerate}
\end{definition}

The representation \((\pi,B)\) \emph{has no almost invariant vectors} if
there exists a compact subset \(Q\) in \(G\) and a constant \(\kappa>0\)
such that for every \(v \in B\),
\[\sup _{g \in Q}\left\|\pi(g)v- v\right\| \geq \kappa \,\|v\| .\] The
pair \((Q, \kappa)\) is called a \emph{Kazhdan pair} for \(\pi\).

\begin{definition}
An isometric representation \((\pi,B)\) on a Banach space \(B\) is
\emph{complemented} if \[B=B^\pi \oplus B_\pi,\] where \(B^{\pi}\) is
the space of fixed vectors.
\end{definition}

\begin{remark}
\label{1e3f00}
Any isometric representation \(\pi\) on super-reflexive Banach spaces are complemented \cite[Proposition~2.6]{Bader2007}.
\end{remark}

\begin{definition}
\label{3ad315}
We say that a complemented representation \((\pi,B)\) \emph{has a
spectral gap}, if \((\pi,B_{\pi})\) has no almost invariant vectors.
\end{definition}

Let us assume from now on that \((\pi_{E},E)\) has a spectral gap.

\begin{remark}[{\cite[Theorem~1.1]{Drutu2019}}]
\label{343c7b}
Let \(G\) be a locally compact group and \((\pi_E,E)\) a complemented
Banach representation of \(G\). Then \((\pi_E,E)\) has a spectral gap if
and only if there exists a compactly supported probability measure
\(\mu\) on \(G\) s.t. the Markov operator
\[\pi_E(\mu) = \int_{G} \pi_E(g) \, d\mu\] restricted to \(B_{\pi}\) has a
spectral gap, i.e. \[\|\pi_E(\mu)|_{E_{\pi}}\|_{\mathrm{op}} < 1.\]
\end{remark}

In our semi-simple setting, we improve \(\mu\) slightly.

\begin{proposition}
\label{5c6adf}
There exists a probability density
\(\rho \in C^{0}_{c}(K\backslash G/K)\) with support generating a dense
subgroup of \(G\) such that \[\|\pi_{E}(\rho)\|_{\mathrm{op}}<1.\]
\end{proposition}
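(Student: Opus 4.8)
The plan is to start from the measure $\mu$ provided by Remark \ref{343c7b} and successively upgrade it: first make it $K$-bi-invariant by averaging, then smooth it by convolving with a $K$-bi-invariant approximate identity, and finally enlarge the support slightly so that it generates a dense subgroup, at each stage keeping the operator norm on $E_\pi$ strictly below $1$. Concretely, let $\mu$ be the compactly supported probability measure with $\|\pi_E(\mu)|_{E_\pi}\|_{\mathrm{op}} =: c < 1$. Replacing $\mu$ by its $K$-bi-invariantization $\tilde\mu(f) = \int_K\int_K \mu(k\,\cdot\,k')\,dk\,dk'$ (Haar probability on the compact $K$) does not increase the operator norm on $E_\pi$, since $\pi_E(\tilde\mu) = \int_K\int_K \pi_E(k)\pi_E(\mu)\pi_E(k')\,dk\,dk'$ is an average of operators each of norm $\le c$ on the invariant subspace $E_\pi$ (here one uses that $E_\pi$ is $\pi_E$-invariant and $\pi_E$ is isometric), and the support of $\tilde\mu$ still generates a dense subgroup because it contains $K$ and a conjugate of $\mathrm{supp}(\mu)$.

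Next I would symmetrize in the second variable of the Cartan decomposition to handle smoothing: let $(\chi_n)$ be a sequence of continuous, compactly supported, $K$-bi-invariant probability densities on $G$ forming an approximate identity at $e$ (these exist because $K\backslash G/K$ is a nice space and one can push forward a Euclidean bump in the Cartan $\mathfrak a^+$ coordinates, then average over $K$ on both sides). Then $\chi_n * \tilde\mu * \chi_n$ is a $K$-bi-invariant continuous probability density, and $\pi_E(\chi_n * \tilde\mu * \chi_n) = \pi_E(\chi_n)\pi_E(\tilde\mu)\pi_E(\chi_n)$, so its operator norm on $E_\pi$ is at most $\|\pi_E(\chi_n)\|_{\mathrm{op}}^2 \cdot c \le c < 1$ since each $\pi_E(\chi_n)$ is an average of isometries and hence has operator norm $\le 1$. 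This already gives a density $\rho_0 \in C_c^0(K\backslash G/K)$ with $\|\pi_E(\rho_0)\|_{\mathrm{op}} < 1$; its support generates a dense subgroup provided $n$ is large enough that $\chi_n$ is supported in a small enough neighborhood of $e$ (the support of $\rho_0$ then contains that of $\tilde\mu$ up to a controlled perturbation, which still generates densely — alternatively, absorb this point into the last step).

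Finally, to force the support to generate a dense subgroup, fix a countable dense subset $\{g_j\}_{j\ge 1}$ of $G$ and a $K$-bi-invariant continuous probability density $\psi_j$ supported in an arbitrarily small neighborhood of $g_j$; set $\rho = (1-\varepsilon)\rho_0 + \varepsilon \sum_{j\ge 1} 2^{-j}\psi_j$ for $\varepsilon > 0$ small. Then $\rho \in C_c^0(K\backslash G/K)$ — wait, the sum is not compactly supported, so instead one takes a \emph{finite} but growing exhaustion is not needed: rather, keep $\rho = (1-\varepsilon)\rho_0 + \varepsilon\,\theta$ where $\theta$ is a single $K$-bi-invariant continuous compactly supported density whose support, together with that of $\rho_0$, generates a dense subgroup — such a $\theta$ exists because $G$ is compactly generated (indeed a connected Lie group), so finitely many $g_j$ in a compact generating neighborhood suffice, and one may smooth around them. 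Then by the triangle inequality on $E_\pi$, $\|\pi_E(\rho)|_{E_\pi}\|_{\mathrm{op}} \le (1-\varepsilon)\|\pi_E(\rho_0)|_{E_\pi}\|_{\mathrm{op}} + \varepsilon\|\pi_E(\theta)\|_{\mathrm{op}} \le (1-\varepsilon)c + \varepsilon < 1$ for $\varepsilon$ small, while $\mathrm{supp}(\rho) = \mathrm{supp}(\rho_0)\cup\mathrm{supp}(\theta)$ generates a dense subgroup by construction. The main obstacle I anticipate is purely bookkeeping: ensuring simultaneously that $\rho$ is $K$-bi-invariant, continuous, compactly supported, \emph{and} has dense-subgroup-generating support, while none of the averaging, smoothing, or convex-combination steps pushes the operator norm back up to $1$ — each individual estimate is a one-line consequence of $\|\pi_E(\nu)\|_{\mathrm{op}} \le 1$ for any probability measure $\nu$ and the $\pi_E$-invariance of $E_\pi$, but threading all the requirements through one explicit $\rho$ requires care.
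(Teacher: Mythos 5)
Your plan is correct in its core mechanism, but it takes a genuinely different route from the paper. The paper does not bootstrap from the measure of Remark \ref{343c7b}: it constructs the density directly, choosing a pair $(\alpha_Q,\beta_Q)$ in $C^0_c(K\backslash G/K)$ with $g\cdot\alpha_Q\le\beta_Q$ for all $g$ in a compact generating set $Q$, setting $\rho_Q=(\alpha_Q+\beta_Q)/\int(\alpha_Q+\beta_Q)$, and deducing $\|\pi_E(\rho_Q)\|_{\mathrm{op}}<1$ from a Kazhdan pair $(Q,\varepsilon)$ by rerunning an argument ``similar to'' Dru\c{t}u--Nowak, Theorem 3.4. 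Your version instead uses the packaged statement of Remark \ref{343c7b} as a black box and upgrades the resulting measure by bi-$K$-averaging, convolution with a $K$-bi-invariant continuous bump, and a convex combination with a generating density, each step being a soft estimate of the form $\|\pi_E(\nu)\|_{\mathrm{op}}\le 1$ for probability measures together with $\pi_E$-invariance of $E_\pi$ and isometry of $\pi_E$ (available after Lemma \ref{b8e205}). This is arguably cleaner and more self-contained, since it avoids reproving any spectral-gap-from-Kazhdan-pair statement; the paper's construction buys an explicit density supported on a generating set in one stroke, with no convolution or mixing needed. Your intermediate claim that $\mathrm{supp}(\tilde\mu)=K\,\mathrm{supp}(\mu)\,K$ ``contains $K$ and a conjugate of $\mathrm{supp}(\mu)$'' is not right as stated, but it is harmless since, as you note, dense generation is secured in the final convex-combination step.

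One point you should add: your argument only bounds $\|\pi_E(\rho)|_{E_\pi}\|_{\mathrm{op}}$, whereas the proposition asserts $\|\pi_E(\rho)\|_{\mathrm{op}}<1$ on all of $E$, which is what is needed later to get a unique fixed point of the affine operator $\alpha(\rho)$ in Proposition \ref{74e9c2}, and which is false on $E^{\pi_E}$ whenever that subspace is nonzero (any probability density acts there as the identity). The paper handles this by a preliminary reduction: the cocycle splits along $E=E^{\pi_E}\oplus E_{\pi_E}$, and its component in $E^{\pi_E}$ is a continuous homomorphism $G\to(E^{\pi_E},+)$, hence trivial because $\mathrm{Hom}(G,\mathbb{R})=0$ for semisimple $G$; one may therefore assume $E=E_{\pi_E}$. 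Without this (or an equivalent remark), your bound on $E_\pi$ does not literally give the stated inequality.
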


\begin{proof}
Using Remark \ref{1e3f00}, \(\pi_{E}\) decomposes as
\(E_{\pi_{E}}\oplus E^{\pi_{E}}\) and the cocycle \(b\) splits
accordingly. As \(G\) is semi-simple, the second summand must be
trivial. Indeed otherwise \(b|_{E^{\pi_{E}}}\) would induce a morphism
between \(G\) and \((E^{\pi_{E}},+)\), which contradicts the fact that
\(\mathrm{Hom}(G,\mathbb{R})=0\). One can therefore assume that
\(E=E_{\pi_{E}}\).

Let \(Q\) be a compact generating set of \(G\). Pick a pair
\((\alpha_Q,\beta_Q)\) of functions in \(C_c^0(K\backslash G/K)\) such
that \(\alpha_Q,\beta_Q\ge0\), \(\alpha_Q(e)>0\), \(\int_G\alpha_Q=1\),
and \(g \cdot \alpha_Q(a)\le \beta_Q(a)\) for all \(g\in Q\) and
\(a\in G\). Notice that \(\beta_Q\) is supported on \(Q\).

Given a pair \((\alpha_Q,\beta_Q)\) we denote by
\(\rho_Q\in C_c^0(K\backslash G/K)\) the continuous probability density
on \(G\) defined by
\[\rho_Q=\frac{\alpha_Q+\beta_Q}{\int_G(\alpha_Q+\beta_Q)}.\]

By assumption, there exists \(\varepsilon>0\) that makes
\((Q,\varepsilon)\) a Kazhdan pair for \((\pi_E,E)\). We remark that \(\pi_{E}\) has
almost invariant vectors if and only if it has a
\((Q, \varepsilon)\)-invariant vector for every \(\varepsilon>0\). It
follows from arguments similar to \cite[Theorem~3.4]{Drutu2019}, that
\(\|\pi_{E}(\rho)\|_{\mathrm{op}}<1\).

\end{proof}

\begin{definition}
Let \(G\) be a topological group, \(B\) a Banach space. An \emph{affine
representation} of \(G\) on \(B\) is a continuous group homomorphism
\[\alpha: G \rightarrow \operatorname{Aff}(B) = B \rtimes \mathrm{GL}(B),\]
where \(\operatorname{Aff}(B)\) is equipped with the product topology
coming from the topology of \(B\) and the compact open topology on
\(\mathrm{GL}(B)\).
\end{definition}

A continuous map \(F: G/K \to E\) is \(G\)-equivariant if there exists
an affine representation \(\alpha\) with linear part \(\pi_{E}\), such
that
\[F = \alpha(g) \circ F \circ \lambda(g^{-1}) \quad \forall g \in G,\]
where \(\lambda(g)\) denote the automorphism of \(G/K\) of
left-translation by \(g \in G\).

\begin{proposition}
\label{74e9c2}
Given \(b\in Z^1(G,\pi_{E})\), there exists \(b'\in B^1(G,\pi_{E})\)
such that \(b_{K} = b+b'\) is smooth and induces a \(G\)-equivariant map
\(F_{b_{K}}:G/K\rightarrow E\) such that \(\Delta F_{b_{K}}=0\).
\end{proposition}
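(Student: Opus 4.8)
The plan is to produce the harmonic representative by averaging, using the spectral gap and the symmetric space structure. Fix the probability density $\rho\in C^0_c(K\backslash G/K)$ from Proposition \ref{5c6adf}, so that $\|\pi_E(\rho)\|_{\mathrm{op}}<1$ on $E=E_{\pi_E}$. First I would encode a cocycle $b\in Z^1(G,\pi_E)$ as a $G$-equivariant map $F_b\colon G\to E$, $F_b(g)=b(g)$, which descends to $G/K$ after a coboundary correction: because $b|_{E^{\pi_E}}$ would give a nontrivial homomorphism $G\to(\mathbb R,+)$ (impossible as $G$ is semisimple), we have $E=E_{\pi_E}$, and there is a unique vector $v_0\in E$ with $\pi_E(\mu_K)v_0-v_0$ absorbing the $K$-dependence, where $\mu_K$ is normalized Haar measure on $K$; replacing $b$ by $b+\,(\pi_E(\cdot)v_0-v_0)$ we may assume $b$ is $K$-bi-invariant in the appropriate sense, so $F_b$ factors through $G/K$. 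This is routine.

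The heart of the argument is to run a discrete-to-continuous harmonic replacement. I would define the convolution operator $P$ on $G$-equivariant maps $F\colon G/K\to E$ by $(PF)(x)=\int_G F(gx)\,\rho(g)\,dg$; equivalently, on the cocycle side, $b\mapsto b+\,(\text{explicit coboundary})$, and the key estimate is that $P$ is a strict contraction on the relevant affine space of equivariant maps with fixed linear part $\pi_E$ — precisely, the difference $PF-F$ lies in the translates of $B^1(G,\pi_E)$ and the associated linear operator has norm $\|\pi_E(\rho)\|_{\mathrm{op}}<1$ by Proposition \ref{5c6adf}. Hence the sequence $P^nF_b$ converges (geometrically) in the compact-open topology to a $\rho$-harmonic equivariant map $F_\infty$, i.e.\ $PF_\infty=F_\infty$, which corresponds to a cocycle $b_K=b+b'$ with $b'\in B^1(G,\pi_E)$. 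Smoothness of $F_\infty$ is automatic: since $\rho\in C^0_c$ and $F_b$ is continuous, $PF_b$ is already $C^0$, and since $\rho$ is $K$-bi-invariant the fixed point is a fixed point of convolution against all $K$-bi-invariant approximations; convolving once more with a smooth compactly supported $K$-bi-invariant bump (which preserves the fixed-point equation because the convolution algebra $C_c(K\backslash G/K)$ is commutative and $F_\infty$ is already $\rho$-harmonic, so smoothing commutes with $P$) upgrades $F_\infty$ to $C^\infty$ without changing it modulo a coboundary.

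Finally I would pass from "$\rho$-harmonic" to "$\Delta F=0$". Here I invoke the standard fact for Riemannian symmetric spaces $G/K$ of the noncompact type: a bounded-below, $G$-equivariant (hence of linear growth by Observation \ref{466efb}) map that is fixed by convolution against one $K$-bi-invariant probability density whose support generates a dense subgroup is annihilated by the Laplacian — one differentiates the mean-value-type identity $F(x)=\int F(gx)\rho(g)\,dg$ at $x$ and uses that, modulo the radial part, convolution by $\rho$ agrees with the heat semigroup to second order, forcing $\Delta F_\infty=0$; alternatively one checks directly that any equivariant fixed point of $P$ is a weak, hence (by elliptic regularity and the smoothness already obtained) strong, solution of $\Delta F=0$, using that $\Delta$ and $P$ are simultaneously diagonalized on $K$-invariant functions via the spherical transform. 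The main obstacle I anticipate is precisely this last step: making rigorous that the \emph{single} operator $P$ coming from Proposition \ref{5c6adf}, rather than the full heat semigroup, detects harmonicity — this needs either that $\rho$ can be chosen radial and "close to a point mass" so its spherical transform vanishes only where the Laplacian eigenvalue does, or a separate argument that the space of $G$-equivariant $P$-harmonic maps coincides with that of genuinely harmonic ones; I would resolve it by following the $K$-bi-invariant averaging argument of \cite{Drutu2019} adapted to the smooth setting, using that on $G/K$ the algebra of $G$-invariant differential operators is generated by $\Delta$, so a $G$-equivariant fixed point of $P$ satisfies an elliptic equation whose only equivariant solutions of subexponential growth are the harmonic ones.
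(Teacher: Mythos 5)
There is a genuine gap, and it sits exactly where you flagged it. Your construction of the harmonic candidate is, modulo phrasing, the paper's: your operator $P$ on $G$-equivariant maps corresponds, under $F\mapsto F(x_0)$, to the affine contraction $\alpha(\rho)=\pi_E(\rho)+b(\rho)$ on $E$, whose unique fixed point $x_1$ gives the coboundary $b'=\pi_E(\cdot)x_1-x_1$; so the existence of a $\rho$-harmonic equivariant representative is fine. What is not established is the passage from ``fixed by convolution with the \emph{single} density $\rho$'' to $\Delta F=0$, and none of the three escape routes you sketch works in the stated generality: comparing convolution by an arbitrary $\rho$ with the heat semigroup ``to second order'' is unjustified; the spherical transform diagonalizes $K$-bi-invariant \emph{scalar} functions, not $E$-valued maps equivariant under a nontrivial affine action with linear part $\pi_E$; and the claim that the algebra of $G$-invariant differential operators on $G/K$ is generated by $\Delta$ holds only in real rank one, whereas Proposition \ref{74e9c2} is stated (and later used) for a general connected semisimple $G$ with finite center. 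The appeal to ``equivariant solutions of subexponential growth'' is likewise not a proof.

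The missing idea is the Gelfand-pair commutativity applied to the \emph{uniqueness} of the fixed point — ingredients you already have but deploy only for smoothing. Since $C^0_c(K\backslash G/K)$ is commutative under convolution, for any $K$-bi-invariant probability density $\sigma$ one has $\alpha(\rho)\bigl(\alpha(\sigma)x_1\bigr)=\alpha(\rho\ast\sigma)x_1=\alpha(\sigma\ast\rho)x_1=\alpha(\sigma)x_1$, so by uniqueness $\alpha(\sigma)x_1=x_1$; equivalently $b_K(\sigma)=0$ for \emph{every} such $\sigma$. This simultaneously gives: $b_K|_K=0$ (so the map descends to $G/K$), smoothness directly from the identity $b_K(g)=\int_G b_K(h)\,\rho(g^{-1}h)\,dh$ with $\rho$ chosen smooth (no separate smoothing-commutes-with-$P$ argument, and no change modulo coboundary to track), and the mean value property with respect to all smooth $K$-bi-invariant densities, which by the standard result \cite[Proposition~3.3.8]{Bekka2008} forces $DF_{b_K}=0$ for every $G$-invariant differential operator $D$, in particular $\Delta F_{b_K}=0$ — in all ranks, with no growth hypotheses. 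Without this step your argument proves only $\rho$-harmonicity, which is strictly weaker than what the proposition asserts and insufficient for the ODE analysis in Section \ref{sec:real_rank_1_proof_of_theorem_ref6ea617}.
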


\begin{proof}
Using Proposition \ref{5c6adf}, there exists a probability density
\(\rho \in C^{0}_{c}(K\backslash G/K)\) such that
\(\|\pi_{E}(\rho)\|_{\mathrm{op}}<1\). Therefore the affine
transformation \(\alpha(\rho)=\pi_{E}(\rho)+b(\rho)\) has a unique fixed
point \(x_1\in E\). Moreover, as \((G,K)\) form a Gelfand pair (see for
example \cite[Proposition~6.1.3]{Dijk2009}), the space of probability
densities \(C^{0}_{c}(K\backslash G/K)\) is commutative for convolution
and, by uniqueness of the fixed point of \(\alpha(\rho)\), the point
\(x_1\) is fixed by every probability density in
\(C^{0}_{c}(K\backslash G/K)\).

Define the coboundary \(b'(g) = \pi_{E}(g)x_{1} - x_{1}\) and
\(b_{K} = b + b'\). Then for any probability density
\(\rho \in C^{0}_{c}(K\backslash G/K)\), \(b_{K}(\rho) = 0\). By the
\(K\)-invariance of \(\rho\), for any \(k \in K\), \[
\begin{split}
b_{K}(\rho) &= \int_{G} b_{K}(kg)\rho(g)\,dg \\
&= b_{K}(k) + \pi_{E}(k) b_{K}(\rho).
\end{split}
\] Thus \(b_{K}|_{K} = 0\) and \(b_{K}\) induces a map on \(G/K\)
denoted \(F_{b_{K}}\).

More generally, for any probability density
\(\rho \in C^{0}_{c}(K\backslash G/K)\), \begin{equation}
\label{5eaaeb}
\begin{split}b_{K}(g) &= \int_{G} b_{K}(gh)\rho(h)\,dh - \pi_{E}(g) b_{K}(\rho)\\&=\int_G b_{K}(h) \rho(g^{-1}h)\,dh.\end{split}\end{equation}

We may choose \(\rho\) to be smooth and thus \(F_{b_{K}}\) is smooth on
\(G/K\).

The mean value property (\ref{5eaaeb}) implies that \(F\) is annihilated
by \(G\)-invariant operators acting on appropriate function spaces
\(\mathcal{F}\) as defined in \cite[pp.~156]{Bekka2008}. \(F_{b_{K}}\)
belongs to the \(G\)-invariant Fréchet space \(C^{\infty}(G/K,B)\).
Recall that \(f \to 0\) in \(C^{\infty}(G/K,B)\) if and only if \(f\)
and all its derivatives tend to \(0\) uniformly on compact subsets on
\(G/K\). Point evaluations
\(\operatorname{ev}_{x}: C^{\infty}(G/K,B) \to B\) and orbit maps by
left-translation on \(C^{\infty}(G/K,B)\) are continuous. Thus
\(C^{\infty}(G/K,B)\) satisfies the conditions on \(\mathcal{F}\) in
\cite[pp.~156]{Bekka2008}. By \cite[Proposition~3.3.8]{Bekka2008}
\(D F_b=0\) for every \(G\)-invariant differential operator \(D\) acting
on \(C^{\infty}(G/K,B)\).

Clearly, the map \(F_{b_{K}}\) is \(G\)-equivariant w.r.t. the isometric
affine Banach action \(\alpha_{K} = \pi_{E} + b_{K}\).

\end{proof}

\section{The Hilbert space case}
\label{sec:the_hilbert_space_case}

Let \(G\) be a connected semisimple Lie group with finite center, and
let \(K \leq G\) be a maximal compact subgroup. Let \((\pi,H)\) be a
uniformly bounded Hilbert representation of \(G\).

In this section, we exploit the harmonicity a map \(F: G/K \to H\) to
relate its growth to the infinitesimal behavior of \(F\). This
relationship is made explicit in Subsection
\ref{sec:real_rank_1_proof_of_theorem_ref6ea617}, where it is analyzed
using ordinary differential equation techniques.

In the Hilbert space setting, the infinitesimal behavior of \(F\) is
captured by the Hilbert--Schmidt norm of its differential at points
\(x \in G/K\). Our first objective is therefore to obtain global control
on these Hilbert--Schmidt norms, which will serve as the main analytic
input for the subsequent growth estimates.

\begin{lemma}
\label{428ac3}
Let \(F: G/K \to H\) be a harmonic map, then for every \(x \in G/K\),
\[\Delta \|F(x)\|^2 = -2\left\|d_{x} F\right\|_{\mathrm{HS}}^2,\] where
\(\left\|d_{x} F\right\|_{\mathrm{HS}}=\left(\operatorname{Tr}\left(\left(d_{x} F\right)^* d_{x}F\right)\right)^{1 / 2}\)
is the Hilbert-Schmidt norm of the differential of \(F\) at \(x\).
\end{lemma}

\begin{proof}
This follows directly from the equation
\[\Delta \|F(x)\|^2 = 2 \langle\Delta F(x), F(x)\rangle -2\left\|d_{x} F\right\|_{\mathrm{HS}}^2.\]
For details see \cite[Proposition~3.3.14.]{Bekka2008}.

\end{proof}

We may assume that \(K\) acts unitarily on \(H\) by averaging the inner
product via the normalized Haar measure on \(K\).

\begin{proposition}
\label{2a47dc}
Let \(F: G/K \to H\) be a nonconstant \(G\)-equivariant harmonic map.
Then the Hilbert-Schmidt norm of the operator
\(d_{x}F: T_{x}(G/K) \to H\) is uniformly bounded, i.e.~there exists
\(c > 0\) s.t. for all \(x \in G/K\),
\[ \frac{\sqrt{ \operatorname{dim}(G/K) }}{c} \leq \|d_{x}F\|_{HS} \leq c \, \sqrt{ \operatorname{dim}(G/K) }.\]
\end{proposition}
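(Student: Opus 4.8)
The plan is to exploit $G$-equivariance to reduce the pointwise statement at a general $x\in G/K$ to the single basepoint $o = eK$, and then use harmonicity plus a compactness argument to get the two-sided bound there. First I would observe that since $F$ is $G$-equivariant with respect to an affine action whose linear part $\pi$ is uniformly bounded, for $g\in G$ and $x = g\cdot o$ we have $d_x F = \pi(g)\circ d_o F \circ d(\lambda(g^{-1}))_x$, where $d(\lambda(g^{-1}))_x: T_x(G/K)\to T_o(G/K)$ is an isometry for the $G$-invariant Riemannian metric. Hilbert–Schmidt norms transform under composition with $\pi(g)$ by at most $\|\pi(g)\|_{\mathrm{op}}\le C$ on one side and $\|\pi(g)^{-1}\|_{\mathrm{op}}\le C$ on the other (using that $\pi$ is uniformly bounded, so $\sup_g\|\pi(g)^{\pm1}\|_{\mathrm{op}} = C < \infty$), and are unchanged by the isometry on the domain. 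Hence
\[
\frac{1}{C}\,\|d_o F\|_{\mathrm{HS}} \le \|d_x F\|_{\mathrm{HS}} \le C\,\|d_o F\|_{\mathrm{HS}}
\qquad\text{for all } x\in G/K,
\]
so it suffices to show $0 < \|d_o F\|_{\mathrm{HS}} < \infty$, and then set $c$ in terms of $C$ and $\|d_o F\|_{\mathrm{HS}}/\sqrt{\dim(G/K)}$.

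The upper bound $\|d_o F\|_{\mathrm{HS}} < \infty$ is immediate from smoothness of $F$ (Proposition \ref{74e9c2}): $d_o F$ is a bounded operator from the finite-dimensional space $T_o(G/K)$ to $H$. For the lower bound, I would argue that $\|d_o F\|_{\mathrm{HS}} \neq 0$: if $d_o F = 0$, then by the equivariance identity above $d_x F = 0$ for all $x$, forcing $F$ to be constant, contradicting the hypothesis that $F$ is nonconstant. (Alternatively, one can see $d_x F$ is either identically zero or nowhere zero directly from the transformation rule, which is perhaps cleaner to state.) Combining, $\|d_o F\|_{\mathrm{HS}}$ is a strictly positive finite number, and the displayed chain of inequalities yields the claimed two-sided bound with
\[
c = \max\!\left\{\, C\,\frac{\|d_o F\|_{\mathrm{HS}}}{\sqrt{\dim(G/K)}}\,,\ C\,\frac{\sqrt{\dim(G/K)}}{\|d_o F\|_{\mathrm{HS}}}\,\right\}.
\]

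I expect the only subtle point to be the transformation rule for $d_x F$ and the bookkeeping of operator norms: one must carefully use that the affine action $\alpha$ has linear part $\pi$, differentiate the equivariance relation $F = \alpha(g)\circ F\circ\lambda(g^{-1})$ at $x$, and note that the translation part of $\alpha(g)$ disappears under differentiation, leaving exactly $d_x F = \pi(g)\circ d_o F\circ d(\lambda(g^{-1}))_x$. The fact that $d(\lambda(g^{-1}))_x$ is a Riemannian isometry (hence preserves $\mathrm{HS}$-norms as a map between the relevant tangent spaces with their induced inner products) uses that the metric on $G/K$ is $G$-invariant, which is part of the setup. Harmonicity of $F$ is not actually needed for this proposition beyond what is already packaged into $F$ being the equivariant harmonic representative; the real content is that uniform boundedness of $\pi$ converts "nonzero at one point" into "bounded away from $0$ and $\infty$ everywhere," and this is where Lemma \ref{428ac3} will be invoked downstream rather than here.
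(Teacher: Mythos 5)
Your proposal is correct, but it takes a genuinely different and more economical route than the paper. The paper starts from the same equivariance identity $d_{x}F=\pi(g)\,d_{x_0}F\,d_{x}\lambda(g^{-1})$, but then works at the basepoint: it notes $d_{x_0}F(\operatorname{Ad}(k)X)=\pi(k)\,d_{x_0}F(X)$, averages the inner product so that $K$ acts unitarily, and introduces the $\operatorname{Ad}(K)$-invariant form $B_{x_0}(X,Y)=\langle d_{x_0}F(X),d_{x_0}F(Y)\rangle$. Positive definiteness of $B_{x_0}$ is proved by showing that vanishing of $d_{x_0}F$ in one direction propagates to all of $\mathfrak{p}$ by $\operatorname{Ad}(K)$-invariance, and then to all of $G/K$ by equivariance and uniform boundedness, contradicting nonconstancy; the resulting two-sided directional bounds $c^{-2}\|X\|^{2}\le B_{x_0}(X,X)\le c^{2}\|X\|^{2}$ are transported to every $x$ and summed over an orthonormal basis. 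You bypass the $K$-unitarization and the $\operatorname{Ad}(K)$-invariance step entirely: you compare Hilbert--Schmidt norms directly, using $\sup_g\|\pi(g)^{\pm1}\|_{\mathrm{op}}=C<\infty$ on the target side and the fact that $d\lambda(g^{-1})_x$ is a Riemannian isometry (hence HS-neutral) on the domain side, and your lower bound needs only the weaker input $d_{o}F\neq 0$, which follows since $d_{o}F=0$ would force $d_xF=0$ everywhere and hence $F$ constant on the connected space $G/K$. What the paper's route buys is slightly stronger information --- uniform two-sided control of $\|d_xF(X)\|$ in every single direction $X$, i.e.\ the pulled-back form is uniformly comparable to the invariant metric --- whereas your argument controls only the HS norm; but the HS bound is exactly what the proposition asserts, so your shorter argument suffices, and your closing remarks (the translation part of $\alpha(g)$ disappears on differentiation; harmonicity is not used here) are accurate and consistent with the paper.
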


Before proceeding with the proof, we briefly recall the connection
between the differentiable structure of \(G/K\) and the Lie algebra of
\(G\), for details we refer to \cite[Chapter~III,~7]{Helgason2001} and
\cite[Chapter~V,~6]{Helgason2001}.

Let \(\mathfrak{g}\) represent the Lie algebra of \(G\), and let
\(\mathfrak{k}\) be the subalgebra associated with \(K\). There exists a
Cartan decomposition \(\mathfrak{k} \oplus \mathfrak{p}\) of
\(\mathfrak{g}\). For every element \(X \in \mathfrak{p}\), there exists
a corresponding tangent vector \(D_X \in T_{x_0}(G / K)\), where
\(x_0 = K\), given by
\(D_X f(x_0) = \frac{d}{dt} f(\exp(tX) x_0)|_{t=0}\),
\(\forall f \in C^{\infty}(G / K)\). The mapping \(X \mapsto D_X\)
defines a linear isomorphism between \(\mathfrak{p}\) and
\(T_{x_0}(G / K)\), enabling an identification of \(T_{x_0}(G / K)\)
with \(\mathfrak{p}\).

For any \(k \in K\), the linear automorphism \(d\lambda(k)_{x_0}\) of
\(T_{x_0}(G / K)\) corresponds to the inner automorphism
\(\operatorname{Ad}(k): \mathfrak{p} \to \mathfrak{p}\). This
relationship is established as follows: \[
\begin{aligned}
\left.\frac{d}{dt} f\left(k \exp(tX) x_0\right)\right|_{t=0} 
& = \left.\frac{d}{dt} f\left(k \exp(tX) k^{-1} x_0\right)\right|_{t=0} \\
& = \left.\frac{d}{dt} f\left(\exp\left(t \operatorname{Ad}(k)X\right) x_0\right)\right|_{t=0},
\end{aligned}
\] for all \(f \in C^{\infty}(G / K)\).

\begin{proof}[Proof of Proposition \ref{2a47dc}]
The tangent space \(T_{x_0}(G / K)\) is identified with \(\mathfrak{p}\)
as previously described, and the tangent space at any vector \(v \in H\)
is similarly identified with \(H\). Using the fact that \(F\) is
\(G\)-equivariant, we deduce the following for all \(k \in K\) and
\(X \in \mathfrak{p}\): \[
\begin{aligned}
d_{x_{0}} F(\operatorname{Ad}(k) X) & =\left.\frac{d}{d t} F\left(\exp(t \operatorname{Ad}(k) X) x_0\right)\right|_{t=0} \\
& =\left.\frac{d}{d t} F\left(k \exp(t X) x_0\right)\right|_{t=0} \\
& =\left.\frac{d}{d t}\left(\pi(k) F\left(\exp(t X) x_0\right)\right)\right|_{t=0} \\
& =\pi(k) \left.\frac{d}{d t} F\left(\exp(t X) x_0\right)\right|_{t=0} \\
& =\pi(k) \, d_{x_{0}} F(X).
\end{aligned}
\] Since \(K\) acts unitarily on \(H\),
\(\|\pi(k)d_{x_{0}} F(X)\| = \|d_{x_{0}} F(X)\|\) for all \(k \in K\).
Thus, the symmetric bilinear form \(B_{x_{0}}\) on
\(T_{x_0}(G / K) \cong \mathfrak{p}\), defined by \[
B_{x_{0}}(X, Y) = \left\langle d_{x_{0}} F(X), d_{x_{0}} F(Y) \right\rangle,
\] is invariant under the action of \(\operatorname{Ad}(K)\).

We claim that \(B_{x_{0}}\) is positive definite. Assume that there
exists a non-zero \(Z \in T_{x_{0}}(G/K)\) s.t.
\(\|d_{x_{0}}F(Z)\| = 0\). Then by the \(\operatorname{Ad}(K)\)
invariance of \(B_{x_{0}}\), \(d_{x_{0}}F\) vanishes on the whole
tangent space \(T_{x_{0}}(G/K)\). By \(G\)-equivariance of \(F\), for
any \(g \in G\) and \(x = g x_0\), it holds that \begin{equation}
\label{9e84cd}
d_{x} F = \pi(g) \,d_{x_{0}} F\,d_{x} \lambda\left(g^{-1}\right).\end{equation}

Since \((\pi,H)\) is uniformly bounded,
\[\left\|d_{x} F\left(d_{x_{0}} \lambda(g) (X)\right)\right\| \lesssim \left\|d_{x_{0}} F(X)\right\| = 0\]
for all \(X \in \mathfrak{p}\). Thus \(F\) would be constant, a
contradiction.

By norm equivalence, there exists \(c > 0\), s.t.
\[\frac{\|X\|_{2}^2}{c^{2}}\leq B_{x_{0}}(X,X) \leq c^{2} \,\|X\|_{2}^2 \quad \text{for all } X \in T_{x_0}(G / K).\]

We claim that \(c\) can be chosen independently of the base-point
\(x_{0}\). Indeed, by \(G\)-equivariance of \(F\), for any \(g \in G\)
and \(x = g x_0\), we have \[
\begin{aligned}
B_{x}(d_{x_{0}} \lambda(g) (X),d_{x_{0}} \lambda(g) (X) )
&=\left\|d_{x} F\left(d_{x_{0}} \lambda(g) (X)\right)\right\|^{2} \\
& \leq C \, \left\|d_{x_{0}} F(X)\right\|^{2} \\
& = C \, B_{x_{0}}(X,X) \\
& \leq C\,c^{2} \, \left\|X\right\|_2^{2}
\end{aligned}
\] for all \(X \in \mathfrak{p}\). A lower bound follows analogously.
The uniform bounds on the Hilbert-Schmidt norm follows by summing over
an orthonormal basis.

\end{proof}

\subsection{Real rank 1 - Proof of Theorem \ref{6ea617}}
\label{sec:real_rank_1_proof_of_theorem_ref6ea617}

If we further require \(G\) to be of real rank one, then the coset space
\(G/K\), can be identifies with either the real \(n\)-hyperbolic space,
the complex \(n\)-hyperbolic space, the quaternionic \(n\)-hyperbolic
space, or the hyperbolic plane over the octonionic numbers. For details
see for example \cite[Theorem~6.105]{Knapp2002} and
\cite[Chapter~3]{Cherix2001}. We denote by \(\mathbb{K}\) the real,
complex, quaternionic or octonionic numbers.

The growth of a \(G\)-equivariant map \(F: G/K \to H\) is radial in the
sense that there exists a function
\(\varphi: \mathbb{R}_{+} \to \mathbb{R}_{+}\), s.t. for every
\(x \in G/K\), and \(r = d(x,x_{0})\) \[\|F(x)\|^{2} = \varphi(r).\]
From equation \ref{9e84cd} it follows that there also exists a function
\(\eta: \mathbb{R}_{+} \to \mathbb{R}_{+}\), s.t.
\[\|d_{x}F(x)\|_{HS} = \eta(r).\]

Thus for a \(G\)-equivariant harmonic map \(F: G/K \to H\),
\[\Delta \|F(x)\|^{2}=-\frac{d^2 \varphi}{d r^2}-m(r) \frac{d \varphi}{d r} = -2\eta(r)\]
where \(m(r)=m_1 \operatorname{coth} r+2 m_2 \operatorname{coth} 2 r\)
and
\(m_1=k(n-1), \, m_2=k-1, \, k=\operatorname{dim}_{\mathbb{R}} \mathbb{K}\)
(see \cite[Chapter~II,~Section~3]{Helgason1984} and Lemma \ref{428ac3}).

\begin{proof}[Proof of Theorem \ref{6ea617}]
Since \(H^1(\pi,H)\neq0\), there exists an unbounded cocycle \(b\). By
Proposition \ref{74e9c2} there exists \(b' \in B^{1}(G,\pi)\) such that
\(b_{K} = b + b'\) induces a harmonic map \(F: G/K \to H\) that is
\(G\)-equivariant w.r.t. \(\alpha = \pi + b_{K}\). Since \(b\) is
unbounded, \(F\) is nonconstant.

Choosing \(\varphi\) and \(\eta\) as above leads to the equation:
\[ \varphi''(r) + m(r) \, \varphi'(r) = 2 \eta(r). \] By setting
\(\psi = \varphi'\), we derive a first-order ordinary differential
equation for \(\psi\), which is solved using the method of variation of
constants. The general solution to the associated homogeneous equation
is a constant multiple of the function:
\[ \psi_0(r) = (\sinh r)^{-m_1} (\sinh 2r)^{-m_2}. \] Thus, a particular
solution to the nonhomogeneous equation is:
\[ \psi(r) = 2\psi_0(r) \int_0^r \frac{\eta(s)}{\psi_0(s)} \, ds. \]
Now, setting:
\[ f(r) = \frac{1}{\psi_0(r)} = (\sinh r)^{m_1} (\sinh 2r)^{m_2}, \] we
obtain for the limit superior (analogously for the limit inferior)
\[ \begin{split} \limsup_{r \to \infty} \frac{1}{f(r)} \int_0^r \eta(s) f(s) \, ds &= \limsup_{s \to \infty} \eta(s) \lim_{ r \to \infty } \frac{f(r)}{f'(r)} \\& = \limsup_{s \to \infty} \frac{\eta(s)}{m_1 + 2 m_2}. \end{split} \]
Thus, we conclude together with Proposition \ref{2a47dc} that there
exists a \(c > 0\) s.t.
\[ \frac{2n }{m_1 + 2 m_2} \frac{1}{c}  + o(1) \leq \psi(r) \leq \frac{2n }{m_1 + 2 m_2} \, c + o(1) \]
as \(r \to \infty\). Integrating implies \[
\|F(\cdot)\|^{2} \sim d(\cdot,x_{0}).
\]

\end{proof}

\subsubsection{Proof of Corollary \ref{99b6bf} }
\label{sec:proof_of_corollary_ref99b6bf}

After identifying \(\operatorname{Sp}(n,1)/K\) with quaternionic
hyperbolic \(n\)-space, we denote by \(\partial X\) the ideal boundary
of \(\operatorname{Sp}(n,1)/K\). For background and details, see
\cite[Chapter~3]{Cherix2001}.

Let \(\pi\) be the natural representation of \(\operatorname{Sp}(n,1)\)
on \(C^{\infty}(\partial X)\) given by left translation. The subspace
\(\mathbb{C}1_{\partial X}\) of constant functions is a trivial
subrepresentation of \(\pi\). We denote by \(\pi_0\) the induced
quotient representation on
\[C^{\infty}(\partial X) \big/ \mathbb{C}1_{\partial X}.\] Nishikawa
showed that there exists a Euclidean norm on this quotient space with
respect to which \(\pi_0\) is continuous and uniformly bounded; see
\cite[Proposition~2.13]{Nishikawa2020_preprint}. Moreover, he proves
that \(\pi_0\) admits a nontrivial cocycle.

In order to obtain a harmonic representative of the cocycle (Proposition
\ref{74e9c2}), we show that \(\pi_{0}\) has a spectral gap. This follows
from the lemma below together with the fact that the only functions in
\(C^{\infty}(\partial X)\) invariant under left translation are the
constant functions.

Recall that by Lemma \ref{b8e205}, any uniformly bounded Euclidean norm
\(\|\cdot\|\) can be modified into a \(G\)-invariant
\(\frac{1}{2}\)-uniformly smooth Banach norm.

\begin{lemma}
\label{418062}
Let \((\pi,B)\) be an isometric Banach space representation of a locally
compact compactly generated group \(G\). Let \(K\) be a compact subgroup
of \(G\). If the space of vectors \(B^K\) that are fixed under the
action of \(K\), has finite dimension, then either \((\pi,B)\) does not
have almost invariant vectors or \(1\subset \pi\).
\end{lemma}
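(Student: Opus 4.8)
The plan is to argue by contradiction: suppose $(\pi,B)$ has almost invariant vectors but $1\not\subset\pi$, and derive that $B^K$ must be infinite-dimensional. The key device is to average over $K$. For any vector $v\in B$, set $P v = \int_K \pi(k)v\,dk$ (the Bochner integral against normalized Haar measure on $K$); since $\pi$ is isometric and $K$ is compact this is a well-defined bounded operator, it is a projection onto $B^K$ (because $\pi(k')Pv = Pv$ for all $k'\in K$ by invariance of Haar measure, and $Pv=v$ when $v\in B^K$), and $\|P\|\le 1$. The first step is therefore to record these standard properties of $P$ and note that $P$ commutes with $\pi(g)$ for every $g$ in the \emph{centralizer} of $K$ — but more usefully, that for arbitrary $g\in G$ one still has good control on $\|P\pi(g)v - v\|$ when $v$ is $K$-fixed and almost invariant.

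Next I would use the hypothesis that $\pi$ almost has invariant vectors to produce, for a cleverly chosen compact set $Q$ and small $\varepsilon$, a unit vector $v$ with $\sup_{g\in Q}\|\pi(g)v-v\|<\varepsilon$. Replacing $v$ by $Pv$ and using $\|P\|\le1$ together with $P\pi(k)=P$, one checks that $Pv$ is still nearly invariant under $Q$ and is $K$-fixed; a quantitative estimate of the form $\|Pv\|\ge 1-\varepsilon$ (so $Pv\ne0$) and $\sup_{g\in Q}\|\pi(g)Pv-Pv\|\lesssim\varepsilon\|Pv\|$ shows that the restriction $(\pi,B^K)$ — wait, $B^K$ need not be $\pi$-invariant, so instead one works with the compression $v\mapsto P\pi(g)v$ of the representation to $B^K$ — almost has invariant vectors, hence by finite-dimensionality actually \emph{has} a nonzero vector $w\in B^K$ with $P\pi(g)w = w$ for all $g$ in a generating set, equivalently $\|\pi(g)w-w\|$ is controlled by the defect of $Pw$ from $w$, which is $0$. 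More carefully: on the finite-dimensional space $B^K$, the family of contractions $\{T_g := P\pi(g)|_{B^K} : g\in G\}$ almost fixes a unit vector; by compactness of the unit sphere of $B^K$ one extracts a genuine common fixed point $w$ of all $T_g$, and then I must upgrade "$T_g w = w$ for all $g$" to "$\pi(g)w = w$ for all $g$", i.e.\ to a genuine $\pi$-invariant vector, giving $1\subset\pi$ and the contradiction.

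The main obstacle is precisely this last upgrade, because $P\pi(g)w=w$ only says that the $K$-average of $\pi(g)w$ equals $w$, not that $\pi(g)w=w$; in a general Banach space an isometry fixing the average of an orbit need not fix the orbit. The way around it is to use that $\|\pi(g)w\|=\|w\|$ (isometry) and $\|P\pi(g)w\|=\|w\|$ simultaneously, so $\pi(g)w$ is a vector of norm $\|w\|$ whose $K$-average also has norm $\|w\|$; by strict convexity — which we are entitled to invoke after passing, via Lemma \ref{b8e205} (applied to the compact group $K$, or directly to $G$), to an equivalent $G$-invariant uniformly convex norm — the average of a family of unit vectors has norm $1$ only if the family is constant, forcing $\pi(k)\pi(g)w=\pi(g)w$ for a.e.\ $k\in K$ and hence $\pi(g)w\in B^K$. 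Then $T_g w=w$ becomes $\pi(g)w=w$ directly. Thus the argument is: (1) reduce to a uniformly convex $G$-invariant norm; (2) build the $K$-averaging projection $P$ onto the finite-dimensional $B^K$; (3) transport almost-invariant vectors through $P$ into $B^K$; (4) extract a genuine common fixed point $w$ on the finite-dimensional sphere; (5) use strict convexity to promote $w$ to a $\pi$-invariant vector, contradicting $1\not\subset\pi$.
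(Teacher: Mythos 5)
Your steps (2)--(4) essentially reproduce the paper's argument: average an almost-invariant sequence by $P=\int_K\pi(k)\,dk$, normalize inside the finite-dimensional space $B^K$, and pass to a limit on its compact unit sphere. The genuine gap is in your step (5), and in the step (1) meant to enable it: you justify strict convexity by invoking Lemma \ref{b8e205}, but that lemma presupposes that the space is $p$-uniformly smooth, whereas Lemma \ref{418062} is stated for an \emph{arbitrary} isometric Banach representation. A general Banach space need not admit any equivalent strictly convex norm, let alone a $G$-invariant one, so the ``a $K$-average of unit vectors has norm one only if the orbit is constant'' argument is not available. Note moreover that for that argument a merely $K$-invariant renorming does not suffice: you need $\|\pi(g)w\|=\|w\|$ in the new norm to force equality in $\|P\pi(g)w\|\le\|\pi(g)w\|$, i.e.\ genuine $G$-invariance. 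So as written the proof does not establish the lemma in its stated generality (it could be salvaged in the Hilbertian setting where the lemma is ultimately applied, but not for the statement itself).

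The obstacle is also self-inflicted, and the repair is exactly the paper's proof. Your step (3) already produces unit vectors $v_n\in B^K$ with $\sup_{g\in Q}\|\pi(g)v_n-v_n\|\to 0$, via the estimate $\|\pi(g)Pb_n-Pb_n\|\le 2\|Pb_n-b_n\|+\|\pi(g)b_n-b_n\|$, using that $\pi$ is isometric and that the $b_n$ may be chosen almost invariant on compact sets containing $K$. There is then no need to pass to the compressions $T_g=P\pi(g)|_{B^K}$ at all: a subsequential limit $w$ of $(v_n)$ in the finite-dimensional $B^K$ satisfies, for each fixed $g$, $\|\pi(g)w-w\|\le 2\|w-v_n\|+\|\pi(g)v_n-v_n\|\to 0$ (uniformity on compact sets, together with the fact that $Q$ generates, covers all of $G$), hence $\pi(g)w=w$ and $1\subset\pi$ directly, with no convexity input. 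By weakening ``almost $\pi$-invariant'' to ``almost fixed by the compressions $T_g$'' you created precisely the upgrade problem that then forced the unjustified renorming detour.
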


\begin{proof}
Assume \(G\) almost has invariant vectors. Let \(Q\) be a compact
generating set of \(G\). Let \[P=\int_K \pi(k)\,dk\] and observe that
since \([\pi(k),P]=0\) for all \(k\in K\), \(P\) is the projection onto
the \(K\)-invariant vectors in \(B\). If \((b_n)_n\) is a normalized
sequence of \(\left( Q, \frac{1}{n} \right)\)-invariant vectors, then
\[\|Pb_n-b_n\|\le\int_K\|\pi(k)b_n-b_n\|\,dk = o(1),\] and thus
\(\|Pb_{n}\|\) is bounded away from \(0\) for \(n\) large enough. Let
\(b_n'=\frac{1}{\|Pb_n\|}Pb_n\in B^K\) for \(n\) large enough and
observe that
\begin{align*}
\|\pi(g)b_n'-b_n'\|
&=\frac{1}{\|Pb_n\|}\|\pi(g)Pb_n-Pb_n\| \\
&\le \frac{2}{\|Pb_n\|}\|Pb_n-b_n\|+\frac{1}{\|Pb_n\|}\|\pi(g)b_n-b_n\|
= o(1)
\end{align*}
uniformly on compact sets of \(G\). On the other hand, \(B^K\) is
finite dimensional, and one can therefore assume that \((b_n')_n\)
converges to \(b\in B^K\) with \(b\neq0\). It follows that \(b\) is
invariant under \(G\).

\end{proof}

\bibliographystyle{plain}
\bibliography{extracted.bib}
\end{document}